\renewcommand{\phi}{\varphi}
\providecommand{\U}[1]{\protect\rule{.1in}{.1in}}
\newcounter{theorem}
\newtheorem{thm}[theorem]{Theorem}
\newtheorem{cor}[theorem]{Corollary}
\newtheorem{prop}[theorem]{Proposition}
\newtheorem{lem}[theorem]{Lemma}
\newtheorem{thmn}{Theorem}
\theoremstyle{definition}
\newtheorem{defn}[theorem]{Definition}
\newtheorem{expl}[theorem]{Example}
\newtheorem{rem}[theorem]{Remark}
\newcommand{\al}{\alpha}
\newcommand{\la}{\lambda}
\newcommand{\ka}{\kappa}
\newcommand{\lm}{^\lambda_\mu}
\newcommand{\Q}{\mathbf{Q}}
\renewcommand{\P}{\mathbf{R}}
\newcommand{\M}{\mathbf{M}}
\newcommand{\N}{\mathbf{N}}
\newcommand{\LS}{\mathbf{X}}
\newcommand{\Ja}{P}
\newcommand{\set}[1]{\left\{#1\right\}}
\title[A combinatorial formula for certain binomial coefficients]{A combinatorial formula for certain binomial coefficients for Jack polynomials}
\author{Yusra Naqvi}
\author{Siddhartha Sahi}
\thanks{The research of the second author was partially supported by a Simons Foundation grant (509766). Part of this work was carried out during the Workshop on Hecke Algebras and Lie Theory at the University of Ottawa, co-organized by the second author and attended by the first. The authors thank the National Science Foundation (DMS-162350), the Fields Institute, and the University of Ottawa for funding this workshop.}
\begin{document}

\begin{abstract}
We present a decomposition of the generalized binomial coefficients associated with Jack polynomials into two factors: a {\em stem}, which is described explicitly in terms of hooks of the indexing partitions, and a {\em leaf}, which inherits various recurrence properties from the binomial coefficients and depends exclusively on the skew diagram. We then derive a direct combinatorial formula for the leaf in the special case where the two indexing partitions differ by at most two rows. This formula also exhibits an unexpected symmetry with respect to the lengths of the two rows. 
\end{abstract}

\maketitle

  
\section*{Introduction}

This work is motivated by the long-standing quest for direct, combinatorial
formulas for binomial coefficients associated with {Jack symmetric
polynomials. We recall that Jack polynomials are }multivariate symmetric
polynomials $P_{\lambda }=P_{\lambda }^{(\alpha )}(x_{1},\ldots ,x_{M})$,
which are indexed by partitions $\lambda $ of length $\leq M$, and whose
coefficients belong to the field $\mathbb{F}=\mathbb{Q}\left( \alpha \right) 
$ of rational functions in the parameter $\alpha $. In fact the $P_{\lambda }
$ form a homogeneous basis for the algebra of symmetric polynomials $\mathbb{%
F}\left[ x_{1},\ldots ,x_{M}\right] ^{S_{M}}$, and hold a special place in
the algebraic hierarchy of multivariate symmetric polynomials. For $\alpha
=0,\nicefrac{1}{2},1,2,\infty $ they specialize to the symmetric monomials, quaternionic
zonal polynomials, Schur functions, real zonal polynomials, and elementary
symmetric polynomials, respectively. In turn they, along with the
Hall-Littlewood polynomials, were one of the two sources of inspiration for
Macdonald's definition of his two parameter family of symmetric polynomials 
\cite{macdonald:95}.

For partitions $\lambda $ and $\mu ${, the Jack binomial coefficient} $%
b_{\mu }^{\lambda }=b_{\mu }^{\lambda }\left( \alpha \right) \in \mathbb{F}$
is defined by  
\begin{equation*}
\frac{P_{\lambda }(1+x_{1},\ldots ,1+x_{M})}{P_{\lambda
}(1,\ldots ,1)}=\sum_{\mu }b_{\mu }^{\lambda }\frac{P_{\mu
}(x_{1},\ldots ,x_{M})}{P_{\mu }(1,\ldots ,1)}.
\end{equation*}
In full generality these coefficients were first considered by Lassalle \cite%
{lassalle:90}, although the special cases $\alpha =1$ and $\alpha =2$
occurred in earlier work of Lascoux \cite{lascoux:82} and Bingham \cite%
{bingham:74}. They were also studied extensively by Okounkov and Olshanski 
\cite{okounkov:olshanski:97} who showed that the $b_{\mu }^{\lambda }$ are
special values of the interpolation polynomials. These latter polynomials
were first defined by one of us in \cite{sahi:94}, and studied in \cite%
{knop:97, knop:sahi:96, sahi:11}. We note that there are
anlaogous definition for binomial coefficients for Macdonald symmetric
polynomials \cite{okounkov:97} and for nonsymmetric Jack and Macdonald
polynomials \cite{sahi:98}, and we hope to treat these in future work.

The starting point of the present paper is a recursive formula for $b_{\mu
}^{\lambda }$ recently discovered by one of us \cite{sahi:11}. By way of
background, we recall that to a partition $\lambda =\left( \lambda
_{1},\ldots ,\lambda _{M}\right) $ one attaches a Young diagram consisting
of a left justified array of boxes, with $\lambda _{i}$ boxes in row $i$.
For a box $b$ in $\lambda $ we write arm$_{\lambda }(b)$ and leg$_{\lambda
}(b)$ for the number of boxes to the right of $b$ and below $b$,
respectively. Here and in the subsequent discussion, we will find it
convenient to set%
$$
r=1/\alpha,
$$
and we define the upper and lower $r$-hooks to be 
$$
c_{\lambda }(b)=\text{arm}(b)+r\cdot \text{leg}(b)+r,\quad c_{\lambda }^{\prime
}(b)=\text{arm}(b)+r\cdot \text{leg}(b)+1.
$$

We write $\lambda \supset \mu $ if the Young diagram of $\lambda $ contains
that of $\mu $, i.e. if $\lambda _{i}\geq \mu _{i}$ for all $i$, and in this
case we denote by $\lambda /\mu $ the skew diagram consisting of the boxes
of $\lambda $ not in $\mu $ (the shaded boxes in the picture below). 
\[
\ytableausetup{boxsize=1.2em}
\ytableaushort{~~~~~,~~~~,~~~,~}*[*(lightgray)]{0,2+2,1+2}
\]
Now a key property of binomial coefficients is that $b_{\mu }^{\lambda }=0$ 
\emph{unless} $\lambda \supset \mu $. If $\lambda /\mu $ consists of a
single box we say that $\lambda ,\mu $ are adjacent, and in this case there
is an explicit combinatorial formula due to Kaneko \cite{kaneko:93} that
expresses $b_{\mu }^{\lambda }$ in terms of upper and lower hooks of $%
\lambda $ and $\mu $. One of the main results of \cite{sahi:11} asserts that
the infinite matrix $\left( b_{\mu }^{\lambda }\right) $ of binomial
cofficients is the exponential of the matrix of adjacent binomial
coefficients, which leads to the recursive formula for the coefficients
alluded to above and recalled in Section \ref{sec:recur} below.

However, finding closed-form formulas for $b_{\mu }^{\lambda }$ remains a
challenging and active area of ongoing research. For example, as noted in 
\cite{sahi:11}, after multiplication by a suitable (and precise) factor the
coefficients seem to be polynomials in $r$ with positive integer
coefficients, but there is currently no proof of this conjecture.

We now briefly describe the main results of this paper. Given partitions
$\lambda\supset\mu$ we label the boxes of $\lambda$ as follows: we label the
boxes of $\lambda/\mu$ by $S$ and label the remaining boxes of $\lambda$ to
indicate whether they share a row $(R)$, column $(C)$, both $(J)$, or neither
$(N)$, with the skew boxes:
\[
\ytableausetup{boxsize=1.2em}
\ytableaushort{NCCCN,RJSS,RSS,N}*[*(lightgray)]{0,2+2,1+2}
\]
For convenience we write $C,R,$ etc. for the set of boxes with label $C,R,$
etc. We define the \textbf{stem} $K_{\mu }^{\lambda }$ and the \textbf{
leaf }$L_{\mu }^{\lambda }$ as follows 
\[
K_{\mu }^{\lambda }=\left( \prod_{b\in C}\frac{c_{\lambda }(b)}{c_{\mu }(b)}%
\right) \left( \prod_{b\in R}\frac{c_{\lambda }^{\prime }(b)}{c_{\mu
}^{\prime }(b)}\right) \left( \prod_{b\in J}\frac{1}{c_{\mu }(b)c_{\mu
}^{\prime }(b)}\right) ,\quad L_{\mu }^{\lambda }=\frac{b_{\mu }^{\lambda }}{%
K_{\mu }^{\lambda }}.
\]

\begin{thm}
The leaf $L_{\mu}^{\lambda}$ depends only on the skew diagram $\lambda/\mu$.
\label{prop:leafskew}
\end{thm}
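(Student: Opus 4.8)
The plan is to induct on $n=|\lambda/\mu|$, using the recursion of Section~\ref{sec:recur}. Since the matrix $\bigl(b^\lambda_\mu\bigr)$ is the exponential of the matrix of adjacent binomial coefficients, writing $a^\nu_\mu:=b^\nu_\mu$ when $\nu/\mu$ is a single box, the recursion may be put in the form
\[
n\,b^\lambda_\mu=\sum_{\nu}a^\nu_\mu\,b^\lambda_\nu ,
\]
the sum running over partitions $\nu$ with $\mu\subset\nu\subseteq\lambda$ and $|\nu/\mu|=1$. These $\nu$ are in bijection with the inner corners of the skew diagram $\lambda/\mu$, and for each of them $\lambda/\nu$ is $\lambda/\mu$ with that corner removed; thus both the index set of the sum and all the shapes $\lambda/\nu$ that occur are determined by $\lambda/\mu$.

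Dividing by $K^\lambda_\mu$ and substituting $b^\lambda_\nu=K^\lambda_\nu L^\lambda_\nu$ turns the recursion into
\[
n\,L^\lambda_\mu=\sum_{\nu}\Bigl(a^\nu_\mu\,\frac{K^\lambda_\nu}{K^\lambda_\mu}\Bigr)L^\lambda_\nu .
\]
By the inductive hypothesis each $L^\lambda_\nu$ (with $|\lambda/\nu|=n-1$) depends only on $\lambda/\nu$, hence only on $\lambda/\mu$ together with the chosen corner. So it suffices to prove that the transition factor $a^\nu_\mu\,K^\lambda_\nu/K^\lambda_\mu$ also depends only on $\lambda/\mu$ and the corner $\nu/\mu$; this forces the right-hand side, and therefore $L^\lambda_\mu$, to depend only on $\lambda/\mu$.

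The base case $n=0$ is trivial. For $n=1$, Kaneko's explicit product formula \cite{kaneko:93} for the adjacent coefficient $b^\lambda_\mu=a^\lambda_\mu$, compared with the definition of $K^\lambda_\mu$, shows that $a^\lambda_\mu=K^\lambda_\mu$ up to a universal constant (which one checks equals $1$); in particular the theorem holds for $n\le 1$. Applying this to the pair $\mu\subset\nu$ rewrites the transition factor as $K^\nu_\mu\,K^\lambda_\nu/K^\lambda_\mu$, so the whole theorem reduces to the purely combinatorial identity: for $\mu\subset\nu\subseteq\lambda$ with $|\nu/\mu|=1$, the quantity $K^\nu_\mu\,K^\lambda_\nu/K^\lambda_\mu$ depends only on $\lambda/\mu$ and $\nu/\mu$. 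To see this, let $\square_0$ be the box $\nu/\mu$, lying in row $i_0$ and column $j_0$. Passing from $\mu$ to $\nu$ changes the label $N,R,C,J$ of a box of $\lambda$ only if that box lies in row $i_0$ or column $j_0$ (no other row or column changes its intersection with the skew diagram), and changes $c_\mu,c'_\mu$ only for boxes to the left of $\square_0$ in row $i_0$ or above $\square_0$ in column $j_0$; the hooks $c_\lambda,c'_\lambda$ do not change at all. Hence every box lying outside row $i_0$ and column $j_0$ contributes identical factors to $K^\lambda_\mu$ and $K^\lambda_\nu$ and is absent from $K^\nu_\mu$, so it cancels from the triple product $K^\nu_\mu\,K^\lambda_\nu/K^\lambda_\mu$; what remains is a product of hook ratios over the boxes of $\lambda$ in row $i_0$ and column $j_0$, together with $\square_0$ itself. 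A box-by-box inspection then shows this surviving product is governed entirely by the positions, within $\lambda/\mu$, of $\square_0$, of the skew boxes in its row and column, and of the ends of that row and column, which depend only on $\lambda/\mu$ and the corner.

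I expect the main obstacle to be this final identity: one must track how the four labels migrate as the row or column of $\square_0$ loses, or fails to lose, its last skew box (an $S$-box becoming $J$, $R$, $C$, or $N$; a $J$-box becoming $R$ or $C$; an $R$- or $C$-box becoming $N$), and then check that the attendant $c_\mu,c'_\mu$ factors telescope against the $c_\lambda,c'_\lambda$ factors to leave precisely the asserted local expression. The remaining ingredients (the induction on $n$, the passage from $b^\lambda_\mu$ to the leaf via the stem, and the $n=1$ computation from Kaneko's formula) are routine once this local analysis is in hand.
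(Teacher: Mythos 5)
Your overall strategy is sound and genuinely different from the paper's. You induct on $n$ via the one-step recursion $n\,b^\lambda_\mu=\sum_\nu a^\nu_\mu\,b^\lambda_\nu$ (the ``dual'' form, which is legitimate since $n!\,b^\lambda_\mu=\sum_T a_T$ can be split off at the first step of each chain), and your observation that $a^\nu_\mu=K^\nu_\mu$ for adjacent pairs is correct: in that case $J=\varnothing$ and the sets $R,C$ of Definition \ref{inddecomp} reduce to Kaneko's, so the constant is exactly $1$. The paper instead expands $n!\,b^\lambda_\mu$ over all maximal chains simultaneously and telescopes the $R$- and $C$-contributions globally, which produces the stem in one stroke and leaves a residue indexed only by $S_+=S\cup J$; your argument, if completed, would essentially recover the dual recurrence of Remark \ref{rem:leafrec} as a byproduct.

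The gap is exactly where you say it is, and it is not cosmetic: in your formulation the entire content of the theorem sits in the claim that $K^\nu_\mu\,K^\lambda_\nu/K^\lambda_\mu$ depends only on $\lambda/\mu$ and the corner, and ``a box-by-box inspection then shows'' is an assertion, not a proof. The inspection is delicate for a reason you should make explicit: the surviving factors involve hooks such as $c_\lambda(b)$ for $b$ in row $i_0$ or column $j_0$, and $\mathrm{arm}_\lambda(b)$, $\mathrm{leg}_\lambda(b)$ are governed by $\lambda_{i_0}$ and $\lambda'_j$, which are \emph{not} determined by the skew shape unless that row or column actually meets $\lambda/\mu$ (similarly $\mu'_j$ is determined by $\lambda/\mu$ only when column $j$ meets it). So one must check, case by case --- whether $\square_0$ is the last skew box of its row, whether the column of $b$ meets the skew diagram, and how the labels migrate among $S,J,R,C,N$ --- that every factor involving such undetermined data cancels in the triple product, and that what remains is expressible through the first and last skew boxes of the relevant rows and columns. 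The identity is true and the cases do close up, but until that verification is written out the proof is incomplete; alternatively, the paper's chain-telescoping argument sidesteps the case analysis entirely by isolating the $T$-independent $R$- and $C$-contributions once and for all.
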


Thus, although the Jack binomial coefficient $b_{\mu }^{\lambda }$ can (and
does) vary with $\lambda $ and $\mu $ for fixed $\lambda /\mu$, our result
shows that this variation is explicitly described combinatorially by the
stem.

The leaf still seems to be a fairly complicated combinatorial object. Our
second main result is an explicit formula for leaf in the case where $%
\lambda /\mu $ consists of (at most) two rows. Our analysis breaks up
naturally into two cases --- either the two rows of $\lambda /\mu $ share no
columns whatsoever, or they have a certain number of overlapping columns. We
will refer to these shortly as the ``gap''
and ``overlap'' cases. Somewhat remarkably it turns that it is possible to give a uniform formula that covers both cases. To this end we attach a $4-$tuple $(u,d,m,y)$ to a skew shape $\lambda /\mu$ with at most two rows, as follows: $u$ and $d$ are the number of boxes in the upper and lower rows respectively, and $m$ is the number of overlapping columns. In the gap case (when $m$ is 0) we define $y$ to be
$$
y=y_{\mu }^{\lambda }=\text{arm}_{\mu }(x^{\ast })+r\cdot \text{leg}_{\mu
}(x^{\ast }),
$$ and in the overlap case we set $y=0.$ 

Now, given \emph{any} non-negative integers $u$ and $d\geq m$, let $d^{\prime }=d-m$, and then define:
\begin{equation}
L\left( u,d;m,y\right) =\sum_{\ell=0}^{d^{\prime}}\binom{d^{\prime}}{\ell}%
\prod_{i=0}^{\ell-1}(m+i+1-r)(i+r)\prod_{j=\ell+1}^{d^{\prime}}(y+d^{\prime
}+r-j)(y+u+r+j).  \label{eq:Lud}
\end{equation}

\begin{thm}
\label{thm:main} Let $\mu \subset \lambda $ be partitions such that $\lambda
/\mu $ consists of at most two rows. Then we have 
\[
L_{\mu }^{\lambda }=L\left( u,d;m,y\right),
\]
where $(u,d,m,y)$ is the $4-$tuple associated to $\la/\mu$ as described above.
\end{thm}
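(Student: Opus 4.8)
The plan is to verify the formula by induction, using the recursive structure of the binomial coefficients recalled in Section~\ref{sec:recur}, combined with the fact (Theorem~\ref{prop:leafskew}) that $L_\mu^\lambda$ depends only on the skew shape $\lambda/\mu$. Since the skew shapes with at most two rows are parametrized by the triple $(u,d,m)$ together with the quantity $y$ (which is determined by how $\lambda/\mu$ sits inside $\mu$ in the gap case), it suffices to check that the right-hand side $L(u,d;m,y)$ satisfies the same recurrences and boundary conditions as $L_\mu^\lambda$. First I would dispose of the base cases: when $d=0$ the skew shape is a single row, where $b_\mu^\lambda$ is known in closed form (iterating Kaneko's adjacent formula along a horizontal strip), and one checks directly that $L(u,0;0,y)$ matches; one should also record the trivial normalization $L(0,0;0,y)=1$.

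Next I would extract, from the exponential/recursive formula for $(b_\mu^\lambda)$, a recurrence that changes one of the parameters $u$, $d$, or $m$ by adding a single box to $\lambda/\mu$ — for instance, adding a box at the end of the lower row (increasing $d$, and possibly $m$, by $1$) or at the end of the upper row (increasing $u$ by $1$). The recursion of \cite{sahi:11} expresses $b_\mu^\lambda$ via a sum over chains of adjacent partitions; restricting to $\lambda/\mu$ with at most two rows collapses this to a manageable sum, and dividing through by the stem $K_\mu^\lambda$ converts it into a recurrence purely among leaves of two-row skew shapes. The heart of the argument is then the combinatorial identity asserting that the explicit sum in~\eqref{eq:Lud} satisfies this same recurrence: one substitutes the closed form, reindexes the binomial sum, and reconciles the two expressions. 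The Vandermonde-type structure of the inner products $\prod_{j}(y+d'+r-j)(y+u+r+j)$ and $\prod_i (m+i+1-r)(i+r)$ is what makes this feasible, and the surprising $u \leftrightarrow$ row-length symmetry mentioned in the abstract should emerge as a symmetry of~\eqref{eq:Lud} that one can exploit to reduce the number of recurrences that need checking.

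The main obstacle, I expect, is the bookkeeping in the recurrence step: the factor $K_\mu^\lambda$ involves products of $r$-hooks over the rows/columns $C$, $R$, $J$ determined by $\lambda/\mu$, and when a box is added these hooks shift in a way that must be tracked carefully, especially at the interface between the two rows (the overlap columns, labeled $J$, contribute the delicate $1/(c_\mu(b)c_\mu'(b))$ terms). Getting the gap case and the overlap case to fall out of one uniform recurrence — with $y$ playing its role in the gap case and vanishing in the overlap case — is precisely where the two cases threaten to diverge, and I would treat the transition $m=0 \leftrightarrow m=1$ as a separate compatibility check. A secondary, purely algebraic obstacle is proving the combinatorial identity for~\eqref{eq:Lud}; if a direct reindexing is unwieldy, an alternative is to recognize the $\ell$-sum as a terminating hypergeometric series and invoke a Pfaff–Saalschütz or Chu–Vandermonde evaluation, or to set up a second (downward) induction on $d'$ internal to~\eqref{eq:Lud} itself.
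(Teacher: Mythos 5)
Your overall strategy coincides with the paper's: use Theorem~\ref{prop:leafskew} to reduce everything to the skew shape, peel one box off the top of each chain in \eqref{b_lamu} to obtain a two-term recurrence among leaves of two-row shapes (this is Proposition~\ref{prop:leafrec}, specialized in Propositions~\ref{prop:leaf1} and~\ref{prop:leaf3}), and then argue by induction that the closed form \eqref{eq:Lud} satisfies the same recurrence and initial conditions. The paper likewise treats the gap and overlap cases separately (Theorems~\ref{thm:q} and~\ref{thm:p}) and glues them along the initial condition $\P^u_d(0)=\Q^u_d(0)$, which is exactly the ``compatibility check at the transition $m=0\leftrightarrow 1$'' you flag. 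One bookkeeping point your sketch omits: in the gap case the recurrence shifts the critical hook, $nL_{\la/\mu}$ being expressed through $\Q^{u-1}_d(y)$ and $\Q^{u}_{d-1}(y+1)$, so $y$ is not a passive parameter under the induction.

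The genuine gap is the central algebraic step: verifying that the sum \eqref{eq:Lud} actually satisfies the leaf recurrence. You defer this to ``reindexing'' or to a Pfaff--Saalsch\"utz/Chu--Vandermonde evaluation, but the latter cannot apply here: writing \eqref{eq:Lud} as a terminating ${}_3F_2$ in the summation index $\ell$, with upper parameters $-d'$, $m+1-r$, $r$ and lower parameters $1-y-d'-r$, $1+y+u+r$, the Saalsch\"utz balance condition forces $u=m$; consistently, the leaf is not a product of linear factors in general (e.g.\ $60+238r+275r^2+90r^3+9r^4$ in Table~\ref{tab:ex}), so no classical summation formula can collapse it. What is actually required is a three-term contiguous-type relation for this non-Saalsch\"utzian series, and the paper has to work for it: in the gap case it introduces the auxiliary family $\M^u_d(z;\theta)$ of Definition~\ref{defn:M}, proves an intermediate recurrence (Lemma~\ref{lem:M}) by expanding in the basis $q_k(\theta)=\prod_{i=0}^{k-1}(\theta+\rho_i^2)$ and matching coefficients termwise, and only then deduces the needed recurrence (Proposition~\ref{prop:M}); the overlap case is handled analogously in the basis $p_k(m)$ (Propositions~\ref{prop:T1}--\ref{prop:T3}). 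Until you supply this identity, or an equivalent contiguous relation with proof, your argument is a correct plan rather than a proof.
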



We present two examples in Table \ref{tab:ex} below.

\begin{center}
\begin{table}[h!] 
\caption{Two examples of stems and leaves} \label{tab:ex}
\begin{tabular}[5pt]{|r||c|c|}
\hline
& $\la=(7,3,3,1),\mu=(4,3,1,1)$ & $\la=(6,5,3,1),\mu=(6,2,1,1)$ \\
\hline
\hline
&&\\
{} &$\ytableaushort{RJ{x^*}RSSS,NCC,RSS,N}*[*(lightgray)]{4+3,0,1+2}$ & $\ytableaushort{NCCCCN,R{x^*}SSS,RSS,N}*[*(lightgray)]{0,2+3,1+2}$\\
&&\\
\hline
$u$ & 3 & 3 \\
$d$ & 2 & 2 \\
$m$ & 0 & 1 \\
$y$ & $1+r$ & 0 \\
$K^\la_\mu$ & $\displaystyle \frac{4(7+3r)}{(4+3r)(1+r)^3(2+r)}$ & $\displaystyle \frac{3(4+3r)(1+2r)(5+2r)}{r(4+2r)(2+r)(1+r)}$\\
$L^\la_\mu$ & $60+238r+275r^2+90r^3+9r^4$ & $6r$ \\
\hline
\end{tabular}
\end{table}
\end{center}

%
%
%
%

The numbers $u$ and $d$ of upper and lower boxes in $\lambda /\mu $ play
rather different roles in formula (\ref{eq:Lud}). However it turns out that
there is a remarkable hidden symmetry; to describe this we set%
\[
\tilde{L}\left( u,d\right) :=\frac{L\left( u,d;m,y\right) }{%
\prod_{i=1}^{d-m}(y+m+i)(y+i-1+2r)}.
\]

\begin{thm}
\label{thm:udsym} If $u,d$ are nonengative integers both $\geq m$, then we
have 
\[
\tilde{L}(u,d)=\tilde{L}(d,u).
\]
\end{thm}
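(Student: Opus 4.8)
The plan is to recognize $L(u,d;m,y)$ as a terminating hypergeometric series ${}_{3}F_{2}(1)$ and to show that the normalized leaf $\tilde{L}$ is itself a ${}_{3}F_{2}(1)$ in which $u$ and $d$ enter \emph{only} through the two interchangeable upper parameters $-(u-m)$ and $-(d-m)$, so that the asserted symmetry becomes manifest. Writing $d'=d-m$ and $u'=u-m$, the target identity is
\[
\tilde{L}(u,d)=\,{}_{3}F_{2}\!\left(\begin{matrix}-u',\ -d',\ y+r\\ y+m+1,\ y+2r\end{matrix};1\right),
\]
whose right-hand side is visibly invariant under $u'\leftrightarrow d'$, i.e.\ under $u\leftrightarrow d$.

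The first step is routine Pochhammer bookkeeping on \eqref{eq:Lud}. One has $\prod_{i=0}^{\ell-1}(m+i+1-r)=(m+1-r)_{\ell}$, $\prod_{i=0}^{\ell-1}(i+r)=(r)_{\ell}$, $\prod_{j=\ell+1}^{d'}(y+d'+r-j)=(y+r)_{d'-\ell}$ and $\prod_{j=\ell+1}^{d'}(y+u+r+j)=(y+u+r+\ell+1)_{d'-\ell}$; then using $(y+r)_{d'-\ell}=(-1)^{\ell}(y+r)_{d'}/(1-y-r-d')_{\ell}$, $(y+u+r+\ell+1)_{d'-\ell}=(y+u+r+1)_{d'}/(y+u+r+1)_{\ell}$ and $\binom{d'}{\ell}=(-1)^{\ell}(-d')_{\ell}/\ell!$ one obtains
\[
L(u,d;m,y)=(y+r)_{d'}\,(y+u+r+1)_{d'}\ \,{}_{3}F_{2}\!\left(\begin{matrix}-d',\ m+1-r,\ r\\ 1-y-r-d',\ y+u+r+1\end{matrix};1\right).
\]
Since $\prod_{i=1}^{d'}(y+m+i)(y+i-1+2r)=(y+m+1)_{d'}(y+2r)_{d'}$, Theorem~\ref{thm:udsym} reduces to the single hypergeometric identity
\[
\frac{(y+r)_{d'}(y+u+r+1)_{d'}}{(y+m+1)_{d'}(y+2r)_{d'}}\ \,{}_{3}F_{2}\!\left(\begin{matrix}-d',\ m+1-r,\ r\\ 1-y-r-d',\ y+u+r+1\end{matrix};1\right)=\,{}_{3}F_{2}\!\left(\begin{matrix}-u',\ -d',\ y+r\\ y+m+1,\ y+2r\end{matrix};1\right).
\]

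To prove this identity I see two routes. The first is purely hypergeometric: reversing the order of summation in the terminating ${}_{3}F_{2}$ of length $d'$ on the left rewrites $\tilde{L}(u,d)$, after using $(1-y-r-d')_{d'}=(-1)^{d'}(y+r)_{d'}$ to absorb the sign, as
\[
\tilde{L}(u,d)=\frac{(m+1-r)_{d'}(r)_{d'}}{(y+m+1)_{d'}(y+2r)_{d'}}\ \,{}_{3}F_{2}\!\left(\begin{matrix}-d',\ y+r,\ -(y+m+r)-(u'+d')\\ r-m-d',\ 1-r-d'\end{matrix};1\right),
\]
in which $u$ and $d$ already enter only through the symmetric combination $u'+d'$; one then applies the appropriate Thomae transformation for terminating ${}_{3}F_{2}(1)$-series (as tabulated in Bailey's tract or in Slater's book) to split off the missing factor $-u'$ and match the remaining denominators. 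The second, probably cleaner to write up, is induction on $d'$: for $d'=0$ both sides equal $1$, and the recurrence that $L(u,d;m,y)$ inherits from the Jack binomial coefficients (established earlier in the paper) translates, on the ${}_{3}F_{2}$ side, into a contiguous relation satisfied by the symmetric series on the right, closing the induction. I expect the main obstacle to be the first route, namely pinning down exactly which chain of classical transformations manufactures the extra negative-integer parameter and tracking the Pochhammer prefactors, since what is required is a ${}_{3}F_{2}$ \emph{transformation} rather than one of the headline closed-form summations (Saalsch\"utz, Dixon, Watson, Whipple); the inductive argument circumvents this at the cost of invoking the inherited recurrence.
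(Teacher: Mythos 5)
Your route is genuinely different from the paper's. The paper proves this symmetry by splitting into the gap case ($m=0$, Theorem \ref{thm:symQ}) and the overlap case ($y=0$, Theorem \ref{thm:symP}) and, in each, running an induction on $n=u+d$ that combines the dual leaf recurrence of Remark \ref{rem:leafrec} with elementary shift identities for the normalizing products $\phi_k$ and $\psi_k$. Your hypergeometric reformulation is uniform in $(m,y)$ and, if completed, would establish the algebraic identity for arbitrary parameters, not only the two families realized by skew shapes. Your Pochhammer bookkeeping is correct, and so is your central claim that
\[
\tilde{L}(u,d)={}_{3}F_{2}\!\left(-u',\,-d',\,y+r;\;y+m+1,\,y+2r;\;1\right),
\]
which is visibly symmetric under $u'\leftrightarrow d'$; I checked it for $d'\le 1$, and it is consistent with $\tilde{L}=1$ when $u=m$. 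However, as written the proposal stops exactly where the proof has to happen: ``the appropriate Thomae transformation'' is named but not produced, and your fallback induction is essentially the paper's own argument in disguise (one must still verify that the symmetric series satisfies the leaf recurrence, which shifts $y$ in the gap case and $m$ in the overlap case, so it is an induction on $u+d$ with $y$ treated as an indeterminate, not on $d'$ alone). The missing step can be closed along your first route: two applications of the standard terminating two-term relation
\[
{}_{3}F_{2}\!\left(-n,a,b;\,c,d;\,1\right)=\frac{(d-b)_{n}}{(d)_{n}}\;{}_{3}F_{2}\!\left(-n,\,c-a,\,b;\;c,\;b-d+1-n;\,1\right),
\]
applied first to the symmetric series with $(a,b,c,d)=(-u',\,y+r,\,y+m+1,\,y+2r)$ and then to the result with $(a,b,c,d)=(y+u+1,\,y+r,\,1-r-d',\,y+m+1)$, produce exactly your reversed series together with the prefactor $(m+1-r)_{d'}(r)_{d'}\big/\big((y+m+1)_{d'}(y+2r)_{d'}\big)$, which completes the identification. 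So the plan is sound, and arguably cleaner and more general than the paper's two-case induction, but the decisive computation is asserted rather than carried out.
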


The rest of this paper is arranged as follows. In Section \ref%
{sec:background} we briefly recall some background to partitions, Jack
polynomials and their binomial coefficients. Section \ref{sec:stemleaf}
contains a detailed description of the decomposition of $\lambda $ induced
by $\mu $ along with the stem-leaf factorization of $b_{\mu }^{\lambda }$.
Here we define the stem combinatorially (Definition \ref{def:stemleaf}) and
establish that the leaf depends only on $\lambda /\mu $ (Theorem \ref%
{prop:leafskew}). The short Section \ref{sec:crithook} serves to define
critical boxes and hooks which play an especially important role when $%
\lambda /\mu $ is a horizontal strip, i.e., when its constituent rows do not
overlap along any columns. In Section \ref{sec:horz} we study this case in
detail, and establish a combinatorial leaf formula which depends on the
critical hook (Theorem \ref{thm:q}). Section \ref{sec:overlap} is devoted to
the case where the two skew rows overlap along $m>0$ columns, and in this
case $m$ plays an important role in the resulting leaf formula (Theorem \ref%
{thm:p}). In each case, we also establish the symmetry between $u$ and $d$. 

\section{Background} \label{sec:background}

In this short section we collect together some
notation and background material for our paper. This has occasioned a slight
overlap with the introduction, especially with the concepts that were needed
for the formulation of the results. We fix throughout a natural number $n$
and a parameter $\alpha ,$ we write $\mathbb{F}=\mathbb{Q}\left( \alpha
\right) $ and put $r=1/\alpha$.

A \emph{partition} $\lambda $ of length $\leq M$ is a finite sequence of
non-negative integers 
\begin{equation*}
\lambda _{1}\geq \lambda _{2}\geq \cdots \geq \lambda _{M}\geq 0.
\end{equation*}%
The $\lambda _{i}$ are referred to as parts of $\lambda $, and we denote
their sum by 
\begin{equation*}
\left\vert \lambda \right\vert =\lambda _{1}+\cdots +\lambda _{M}
\end{equation*}%
We will identify a partition $\lambda $ with its \emph{Young diagram,} which
is a left justified array of boxes, with  $\lambda _{i}$ boxes in row $i$.
For instance, the diagram of $(4,3,2,1)$ (and also of $\left(
4,3,2,1,0\right) $ etc.) is as follows: 
\begin{equation*}
\ytableaushort{~~~~,~~~,~~,~}
\end{equation*}

There are two natural partial orders on the set of partitions. The \emph{%
dominance} order $\mu \leq \lambda $ is defined by the requirement%
\begin{equation*}
\mu _{1}+\cdots +\mu _{i}\leq \lambda _{1}+\cdots +\lambda _{i}\text{ for
all }i.
\end{equation*}%
with equality for $i=M,$ i.e. we have $\left\vert \lambda \right\vert
=\left\vert \mu \right\vert .$ The containment order $\mu \subset \lambda $
is defined by%
\begin{equation*}
\mu _{i}\leq \lambda _{i}\text{ for all }i.
\end{equation*}%
In terms of Young diagrams, dominance means that $\mu $ is obtained from $%
\lambda $ by moving some boxes to lower rows; while containment means that
the diagram of $\lambda $ contains that of $\mu $. For $\mu \subset \lambda $%
, the \emph{skew diagram} $\lambda /\mu $ consists of boxes of $\lambda $
that are not in $\mu $. If $\lambda /\mu $ consists of exactly one box, we
say that $\lambda $ and $\mu $ are adjacent, and we write $\mu \subset
:\lambda $. 

For a box $s$ in $\lambda $, we write $\text{arm}_{\lambda }(s)$ for the
number of boxes to its right, and $\text{leg}_{\lambda }(s)$ for the number
of boxes below it. We define the \emph{lower }and  \emph{upper }$r$\emph{%
-hook} of $s$ to be 
\begin{equation*}
c_{\lambda }(s)=\text{arm}(s)+r\cdot \text{leg}\left( s\right) +r,\quad
c_{\lambda }^{\prime }(s)=\text{arm}(s)+r\cdot \text{leg}\left( s\right) +1.
\end{equation*}

\subsection{Jack Polynomials}
Jack polynomials form a linear basis for the algebra of symmetric
polynomials $\mathbb{F}\left[ x_{1},\ldots ,x_{M}\right] ^{S_{M}}$. They
arise as eigenfunctions of the Laplace-Beltrami operator  
\[
D(\al) = \frac{\al}{2} \sum_i x_i^2 \frac{\partial^2}{\partial x_i^2} + \sum_{i \neq j} \frac{x_i^2}{x_i-x_j} \frac{\partial}{\partial x_i}.
\]
It is readily checked that $D$ is upper-triangular with respect to the
dominance order on monomial symmetric functions $m_{\lambda }$ in the sense
that 
\[
D(\al) m_\la = \sum_{\mu\leq \la} c_{\la,\mu} m_\mu.
\]
$P_{\lambda }=P_{\lambda }^{(\alpha )}(x_{1},\ldots
,x_{M})$ is the unique eigenfunction of $D$ of the form 
\begin{equation*}
P_\la=m_\la+\sum_{\mu\lneq \la} v_{\la,\mu} m_\mu.
\end{equation*} 

We refer the reader to \cite{jack:69,knop:sahi:97,macdonald:95,stanley:89}
for more properties of Jack polynomials, including their role in algebraic combinatorics.

\subsection{Binomial Coefficients} \label{sec:recur}

For partitions $\mu \subset \lambda $, the \textit{Jack binomial coefficients%
} $b_{\mu }^{\lambda }$ are defined by the expansion 
\[
\frac{\Ja_\la(1+x_1,\ldots,1+x_M)}{\Ja_\la(1,\ldots,1)}=\sum_\mu b^\la_\mu \frac{\Ja_\mu(x_1,\ldots,x_M)}{\Ja_\mu(1,\ldots,1)}.
\]

For adjacent partitions $\mu \subset :\lambda $, one knows by \cite[Prop. 2]%
{kaneko:93}  that $b_{\mu }^{\lambda }$ equals the quantity 
\begin{align}
\label{a_lamu}
a^\lambda_\mu =\left(\prod_{s \in C} \frac{c_\lambda(s)}{c_\mu(s)} \right) \left( \prod_{s \in R} \frac{c'_\lambda(s)}{c'_\mu(s)} \right),
\end{align}
where $C$ and $R$ are the boxes in $\mu $ that lie in the same column and
row, respectively, as the unique box in $\lambda /\mu$. 

In \cite[Theorem 2]{sahi:11} one of us established the following recursion for binomial
coefficients:
\begin{align*}
\left(|\la|-|\mu|\right) \cdot b^\la_\mu =  \sum_{\ka\subset:\la} a^\la_\ka \cdot b^{\ka}_\mu.
\end{align*}
This can be reformulated as follows. For $\mu \subset \lambda 
$ let $\mathcal{T=T}^\la_\mu$ be the set of all ascending sequences $%
T$ of the form 
$$\mu = \mu_0 \subset: \mu_{1} \subset: \cdots \subset: \mu_n = \la.$$
For $T$ in $\mathcal{T}$ let $a_{T}$ denote the product of adjacent
coefficients 
\begin{align}
\label{a_T}
a_T = \prod^{n}_{i=1} a_{\mu_{i-1}}^{\mu_i}.
\end{align}
Then, as noted in the proof of \cite[Theorem 5]{sahi:11}, one has 
\begin{align}
\label{b_lamu}
n! \cdot b^\la_\mu =  \sum_T a_T.
\end{align}


\section{Stems and Leaves} \label{sec:stemleaf}

We now generalize the sets $C$ and $R$ from (\ref{a_lamu}), as foreshadowed in the introduction.
\begin{defn}
\label{inddecomp}
Given partitions $\la \supset \mu$, we decompose the Young diagram of $\la$ relative to $\mu$ into the following sets:
\begin{align*}
S\lm&=\la/\mu=\set{(i,j) \in \la \mid (i,j)\notin \mu} \mbox{ (\emph{skew} boxes)} \\
R\lm&=\set{(i,j) \in \mu \mid \mu_i < \la_i \text{ and } \mu'_j = \la'_j} \mbox{ (\emph{row} boxes)}, \\
C\lm&=\set{(i,j) \in \mu \mid \mu_i = \la_i \text{ and } \mu'_j < \la'_j} \mbox{ (\emph{column} boxes)}, \\
J\lm&=\set{(i,j) \in \mu \mid \mu_i < \la_i \text{ and } \mu'_j < \la'_j} \mbox{ (\emph{joint} boxes)}, \\
N\lm&=\set{(i,j) \in \mu \mid \mu_i = \la_i \text{ and } \mu'_j = \la'_j} \mbox{ (\emph{neutral} boxes)}.
\end{align*}
This collection of sets is called the \textbf{decomposition of $\la$ induced by $\mu$}, and the boxes in the diagram of $\la$ can be written as the disjoint union $S \cup R \cup C \cup D \cup N$. (We omit the indexing partitions $\la$ and $\mu$ whenever they are clear from context.) 
\end{defn}

\begin{expl} For $\la=(8,7,3,3,1)$ and $\mu=(8,4,3,1,1)$,
\[
\ytableaushort{NCCNCCCN,RJJRSSS,NCC,RSS,N}*[*(lightgray)]{0,4+3,0,1+2}
\]
\end{expl}


Note that $J$ is determined completely by the skew diagram $S$, since its boxes are precisely those which share both a row and column with some box in $S$. We refer to $S_+ = S\cup J$ as the \emph{completion} of $S$. 

\begin{defn} 
\label{def:stemleaf}
For $\lambda \supset \mu$ the \textbf{stem} is defined as follows: 
\[
K^\la_\mu= \frac{\displaystyle\left(\prod_{b \in C} \frac{c_\lambda(b)}{c_\mu(b)}\right) \left(\prod_{b \in R} \frac{c'_\lambda(b)}{c'_\mu(b)}\right)}{\displaystyle\left(\prod_{b \in J}c_\mu(b)c'_\mu(b)\right)},
\] and the \textbf{leaf} is defined to be the quotient $\displaystyle L^\la_\mu=\frac{b^\la_\mu}{K^\la_\mu}.$ 
\end{defn}

We are now ready to prove our first main theorem.

\begin{thmn} The leaf $L^\la_\mu$ depends only on the skew diagram $\la/\mu$. 
\end{thmn}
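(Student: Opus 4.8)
The plan is to identify a family of ``elementary moves'' on a pair $(\la,\mu)$ which leave the skew diagram $\la/\mu$ unchanged, to verify that the leaf is invariant under such a move, and to observe that any two pairs with the same skew diagram are connected by a chain of these moves. Call $(\la,\mu)\rightsquigarrow(\hat\la,\hat\mu)$ an \emph{elementary move} if there is a box $t$ which is addable to $\mu$ but does not lie in $\la$, and $\hat\mu=\mu\cup\{t\}$, $\hat\la=\la\cup\{t\}$. Writing $t=(a,b)$, the hypotheses force $\mu_a=\la_a=b-1$ and $\mu_{a-1}\ge b$, whence $t$ is addable to every partition $\nu$ with $\mu\subseteq\nu\subseteq\la$ and lies in none of them; in particular $t$ is addable to $\la$, $\hat\mu\subset\hat\la$, and $\hat\la/\hat\mu=\la/\mu$ as sets of boxes. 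Conversely, any two pairs with a common skew diagram are joined by a finite chain of elementary moves and their inverses --- a routine combinatorial fact. Thus it suffices to prove $L^\la_\mu=L^{\hat\la}_{\hat\mu}$ across a single elementary move, i.e.\ $b^{\hat\la}_{\hat\mu}\big/b^\la_\mu=K^{\hat\la}_{\hat\mu}\big/K^\la_\mu$.

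I would prove this by induction on $n=|\la|-|\mu|$. For $n\le 1$ it is immediate, since $b^\la_\mu=K^\la_\mu$ whenever $\la/\mu$ has at most one box: for a single box this is precisely Kaneko's formula (\ref{a_lamu}) together with the fact that the set $J$ of joint boxes is empty, and for zero boxes both sides equal $1$. For $n\ge 2$ one feeds the move through the recursion $n\,b^{\hat\la}_{\hat\mu}=\sum_{\hat\ka\subset:\hat\la}a^{\hat\la}_{\hat\ka}\,b^{\hat\ka}_{\hat\mu}$. Since $t\notin\la$, every $\hat\ka$ contributing a nonzero term contains $t$, hence equals $\ka\cup\{t\}$ for a unique $\ka\subset:\la$ with $\mu\subseteq\ka$ (delete $t$, a removable box of $\hat\ka$), and $(\ka,\mu,t)$ is again the data of an elementary move, now with $|\ka|-|\mu|=n-1$. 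Replacing each $b^{\ka\cup\{t\}}_{\hat\mu}$ by $\bigl(K^{\ka\cup\{t\}}_{\hat\mu}\big/K^\ka_\mu\bigr)b^\ka_\mu$ via the inductive hypothesis and comparing with $n\,b^\la_\mu=\sum_\ka a^\la_\ka\,b^\ka_\mu$, the claim reduces, coefficient by coefficient, to the purely combinatorial identity
\[
\frac{a^{\la\cup\{t\}}_{\ka\cup\{t\}}}{a^\la_\ka}=\frac{K^{\la\cup\{t\}}_{\hat\mu}\big/K^\la_\mu}{K^{\ka\cup\{t\}}_{\hat\mu}\big/K^\ka_\mu},\qquad \ka\subset:\la,\ \ \mu\subseteq\ka ,
\]
from which the binomial coefficients have disappeared.

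To establish this identity, write $\la=\ka\cup\{s\}$; both sides are explicit finite products of upper and lower $r$-hooks. The governing mechanism is that adjoining $t=(a,b)$ to a partition $\nu$ raises $c_\nu(x)$ by $1$ when $x$ lies in row $a$ of $\nu$, raises it by $r$ when $x$ lies in column $b$, and changes nothing otherwise (and likewise for $c'_\nu(x)$); moreover, because $\la_a=b-1$, a box of $\la$ in column $b$ automatically lies above row $a$ and a box of $\la$ in row $a$ automatically lies left of column $b$, so these two cases are disjoint and readily enumerated. On the left one expands $a^\la_\ka$ via Kaneko's formula (\ref{a_lamu}) applied to $s$ and records the resulting hook changes. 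On the right one uses that the decomposition of $\la\cup\{t\}$ induced by $\hat\mu$ differs from that of $\la$ induced by $\mu$ only by attaching a single label to the new box $t$ --- the skew diagram, hence every box's row/column incidence with it, being unchanged --- so that $K^{\la\cup\{t\}}_{\hat\mu}$ differs from $K^\la_\mu$ solely through that one new factor at $t$ and through the hook changes in row $a$ and column $b$. A finite case analysis, on the location of $s$ and on which rows and columns the skew boxes meet relative to $t$, then matches the two sides.

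I expect this last matching to be the crux: the substance of the theorem is the coincidence that the variation of $b^\la_\mu$, dictated by Kaneko's adjacent-box formula, and the variation of the separately defined stem $K^\la_\mu$ proceed in lockstep under an elementary move, and exposing this requires careful though entirely elementary bookkeeping. The inductive step could also be replaced by an argument through the chain formula (\ref{b_lamu}): adjoining $t$ to every partition of a saturated chain $T\in\mathcal{T}^\la_\mu$ defines a bijection $T\mapsto\hat T$ onto $\mathcal{T}^{\hat\la}_{\hat\mu}$ --- both sets being indexed by the standard Young tableaux of $\la/\mu$ --- and the theorem then reduces to showing that $a_{\hat T}/a_T$ is independent of $T$, once more a hook identity of comparable difficulty.
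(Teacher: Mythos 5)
Your overall architecture is sound and genuinely different from the paper's: the elementary moves are well defined (and your observation that $t=(a,b)$ is then addable to every $\nu$ with $\mu\subseteq\nu\subseteq\la$ is exactly what makes the chain bijection work), the connectivity claim is indeed routine (peel boxes off the lowest free row first), and pushing a move through the recursion of Section \ref{sec:recur} correctly reduces the theorem to the displayed term-by-term hook identity, which is in fact true. The genuine gap is that this identity --- which you yourself call the crux and ``the substance of the theorem'' --- is never proved; ``a finite case analysis \dots then matches the two sides'' is a promissory note, not an argument. The bookkeeping is not trivial. Since $t$ sits in a row \emph{and} a column meeting no skew box (note $\mu'_b=\la'_b=a-1$ as well as $\mu_a=\la_a=b-1$), the sets $C$ and $R$ in Kaneko's formula for the added skew box $s$ are unchanged by the move, but the hooks of the (at most one) box of $C$ in row $a$ and the (at most one) box of $R$ in column $b$ each shift, producing cross-ratios of the shape $\frac{(c+1+r)\,c}{(c+1)(c+r)}$ on the left; on the right one must track how the boxes of row $a$ and column $b$ distribute among the labels $C,R,J,N$ for all four stems and verify that the surviving cross-ratios agree. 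Until that computation is written out, the proof is incomplete at its computational heart.

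For contrast, the paper's proof sidesteps any move-by-move comparison: it starts from $n!\,b^\la_\mu=\sum_T a_T$, rewrites each $a_T$ as a product over \emph{all} boxes of $\la$ of local hook ratios, and notes that for a box labelled $R$ or $C$ these ratios telescope along the chain to $c'_\la(s)/c'_\mu(s)$ or $c_\la(s)/c_\mu(s)$ --- precisely the stem factors --- while $N$-boxes contribute nothing; what survives is indexed by $S\cup J$ alone, so the stem is extracted from every chain simultaneously. If you want to complete your argument, the chain-formula variant you mention at the end is the easier of your two options: $a_{\hat T}/a_T$ factors box by box, and the telescoping observation above essentially performs your case analysis for you, showing directly that the ratio equals $K^{\hat\la}_{\hat\mu}/K^{\la}_{\mu}$ independently of $T$.
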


\begin{proof}  
Letting $n$ equal $|\la|-|\mu|$, recall from (\ref{b_lamu}) that  $n!\cdot b^\lambda_\mu$ is the sum of multiplicities $a_{T}$ over all ascending sequences $T$ of the form $\mu = \mu_0 \subset: \cdots \subset: \mu_n = \la$. For each such $T$, we denote by $C^T_i$ and $R^T_i$ those boxes in $\mu_{i-1}$ which lie in the same column and row as the unique box of $\mu_{i}/\mu_{i-1}$. Using (\ref{a_T}) and then (\ref{a_lamu}), we obtain
\begin{align*}
a_{T} &= \prod^{n}_{i=1} a_{\mu_{i-1}}^{\mu_i}= \prod^{n}_{i=1} \left(\prod_{s \in C^T_i} \frac{c_{\mu_{i}}(s)}{c_{\mu_{i-1}}(s)} \right) \left( \prod_{s \in R^T_i} \frac{c'_{\mu_{i}}(s)}{c'_{\mu_{i-1}}(s)} \right).
\end{align*}
We observe that $a_T$ may also be expressed as a product over all boxes of $\la$ 
\[
a_{T}=\prod_{s \in \la} \left( \prod^{n}_{i=1} h^T_i(s) \right),
\]
where
\begin{align}
\label{eq:h}
h^T_i(s)=\begin{cases} \frac{c_{\mu_{i}}(s)}{c_{\mu_{i-1}}(s)} &\mbox{ if } s \in C^T_i \\
																			\frac{c'_{\mu_{i}}(s)}{c'_{\mu_{i-1}}(s)} &\mbox{ if } s \in R^T_i\\
																			1 &\mbox{ otherwise}.
								\end{cases}
\end{align} 

Now recall that each box $s$ of $\la$ lies in precisely one of the five sets from Defintion \ref{def:stemleaf}. If $s\in N$, then $s \notin R^T_i  \cup C^T_i$ for any sequence $T$ and index $i$, so none of its hook ratios appear as factors in $a_T$. And if $s \in R$, then $s$ is not an element of of $C^T_i$ for any $i$, but the set $I$ of indices $i$ for which $s \in R^T_i$ is non-empty. If $j\notin I$, then $c'_{\mu_j}(s)$ and $c'_{\mu_{j-1}}(s)$ are equal, so $h^T_j(s)=1$. Therefore, in this case we obtain (via telescoping product)
\[ 
\prod^{n}_{i=1} h^T_i(s)=\prod_{i=1}^n \frac{c'_{\mu_{i}}(s)}{c'_{\mu_{i-1}}(s)}=\frac{c'_{\la}(s)}{c'_{\mu}(s)}.
\]
By an analogous argument, if $s \in C$, then 
\[
\prod^{n}_{i=1} h^T_i(s)=\prod_{i=1}^n \frac{c_{\mu_{i}}(s)}{c_{\mu_{i-1}}(s)}=\frac{c_{\la}(s)}{c_{\mu}(s)}.
\]
Absorbing the $C$, $R$ and $N$ boxes' contributions to the multiplicity of $T$ into the stem $K^\la_\mu$, we have
\[
a_{T}=K^\la_\mu \left(\prod_{s\in J} c_\mu(s)c'_\mu(s) \right) \left(\prod_{s \in S_+}  \prod^{n}_{i=1} h^T_i(s) \right),
\]
whence
\[
n!L^\la_\mu= \frac{n!b^\la_\mu}{K^\la_\mu}= \left(\prod_{s\in J} c_\mu(s)c'_\mu(s) \right) \sum_{T} \left( \prod_{s \in S_+} \prod^{n}_{i=1} h^T_i(s) \right).
\]
Since all indices in the rightmost expression above are sourced from the completion $S_+ = S \cup J$, it follows that the leaf depends only on $S = \la/\mu.$
\end{proof}

In light of Theorem \ref{prop:leafskew}, we will henceforth denote the leaf by $L_{\la/\mu}$ rather than $L^\la_\mu$. Leaves inherit the following recurrence from $b^\la_\mu.$ 

\begin{prop} Given partitions $\mu \subset \la$ with $|\la|-|\mu|=n$, the leaf $L_{\la/\mu}$ satisfies
\begin{align*}
nL_{\la/\mu}&=\sum_{\ka}L_{\ka/\mu}\prod_{s\in S_+} h^\la_\ka(s)\ell^\ka_\mu(s), 
\end{align*}
where the sum is over $\ka$ satisfying $\mu \subset \ka \subset: \la$, and where $h^\la_\ka(s)$ and $\ell^\ka_\mu(s)$ are defined by:
\begin{align*}
h_\star^\bullet(s)=\begin{cases}
\frac{c_\bullet(s)}{c_\star(s)} &\mbox{ if } s \in C_\star^\bullet \\
\frac{c'_\bullet(s)}{c'_\star(s)} &\mbox{ if } s \in R_\star^\bullet \\
1 &\mbox{ otherwise } \\
\end{cases}
\qquad 
\mbox{ and }
\qquad \ell_\star^\bullet(s)=\begin{cases}
c_\bullet(s)c'_\star(s) &\mbox{ if } s \in C_\star^\bullet \\
c'_\bullet(s)c_\star(s) &\mbox{ if } s \in R_\star^\bullet \\
1 &\mbox{ otherwise } \\
\end{cases}
\end{align*}

\label{prop:leafrec}
\end{prop}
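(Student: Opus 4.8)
The plan is to push the known recursion $(|\la|-|\mu|)\,b^\la_\mu = \sum_{\ka\subset:\la} a^\la_\ka\, b^\ka_\mu$ through the stem-leaf factorization, exactly as the proof of Theorem~\ref{prop:leafskew} pushed the formula $n!\,b^\la_\mu = \sum_T a_T$ through it. First I would write $n b^\la_\mu = \sum_{\ka\subset:\la} a^\la_\ka b^\ka_\mu$ and substitute $b^\la_\mu = K^\la_\mu L_{\la/\mu}$, $b^\ka_\mu = K^\ka_\mu L_{\ka/\mu}$ on both sides, so that the identity becomes
\[
n\, L_{\la/\mu} \;=\; \sum_{\ka} \frac{a^\la_\ka\, K^\ka_\mu}{K^\la_\mu}\, L_{\ka/\mu},
\]
the sum over $\ka$ with $\mu\subset\ka\subset:\la$. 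The entire content of the proposition is then the identity of rational functions
\[
\frac{a^\la_\ka\, K^\ka_\mu}{K^\la_\mu} \;=\; \prod_{s\in S_+} h^\la_\ka(s)\,\ell^\ka_\mu(s),
\]
where $S_+ = S^\la_\mu \cup J^\la_\mu$ is the completion of the skew diagram $\la/\mu$.

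The key step, therefore, is a box-by-box bookkeeping argument establishing the displayed ratio identity. Using $(\ref{a_lamu})$, write $a^\la_\ka = \prod_{s\in C^\la_\ka}\frac{c_\ka(s)}{c_\mu(s)}\cdot\prod_{s\in R^\la_\ka}\frac{c'_\ka(s)}{c'_\mu(s)}$ — wait, more carefully: $a^\la_\ka$ is a product over the boxes of $\ka$ in the same row/column as the unique box of $\la/\ka$, with hooks of $\la$ and $\ka$. Since passing from $\ka$ to $\la$ only changes arms/legs of boxes that share a row or column with the added box, I would express each of $K^\la_\mu$, $K^\ka_\mu$, $a^\la_\ka$ as products over all boxes of $\la$, collect the exponent of each elementary factor $c$ or $c'$ attached to each box $s$, and verify cancellation. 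The natural case split is on which of the five classes $N,C,R,J,S$ the box $s$ belongs to \emph{relative to $\mu$}, further refined by which class $s$ belongs to relative to $\ka$, and by whether $s$ lies in the row or column of the box $\la/\ka$. Boxes that are neutral throughout contribute $1$ to every factor. For boxes in $C^\la_\mu$ or $R^\la_\mu$ one gets a telescoping-type cancellation between the $C,R$ part of $K^\la_\mu$, the $C,R$ part of $K^\ka_\mu$, and $a^\la_\ka$, leaving either nothing or exactly an $h^\la_\ka(s)$ factor (the case that $s$ is, say, a column box for $\la/\mu$ but lies in the row of $\la/\ka$ is where the $c$-versus-$c'$ interplay must be checked). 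The $J$ boxes require the extra care: the denominators $\prod_{b\in J}c_\mu(b)c'_\mu(b)$ in $K^\la_\mu$ and the corresponding (possibly smaller) denominator in $K^\ka_\mu$ must combine with $a^\la_\ka$ to produce exactly the $\ell^\ka_\mu(s)$ factors $c_\ka(s)c'_\mu(s)$ or $c'_\ka(s)c_\mu(s)$ for the boxes where a $J$-box of $\la/\mu$ is merely a $C$- or $R$-box (or $S$-box) of $\ka/\mu$. I expect the main obstacle to be precisely this: tracking how a box's class can change when passing from $\mu\subset\ka$ to $\mu\subset\la$ (a $J$-box of $\la/\mu$ may be $N$, $R$, $C$, $J$, or even skew relative to $\ka$), and confirming that in every combination the net factor is $h^\la_\ka(s)\ell^\ka_\mu(s)$ as defined — in particular that the ``otherwise'' cases really do collapse to $1$ and that no spurious factors survive.

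Once the ratio identity is in hand, the proposition follows immediately by substitution. An alternative, perhaps cleaner, route avoiding a full re-derivation: combine the identity $n!\,b^\la_\mu = \sum_T a_T$ with the factorization $a_T = K^\la_\mu\big(\prod_{s\in J}c_\mu(s)c'_\mu(s)\big)\prod_{s\in S_+}\prod_i h^T_i(s)$ from the proof of Theorem~\ref{prop:leafskew}, split each sequence $T\in\mathcal{T}^\la_\mu$ at its first step $\mu\subset:\ka$ to write $\sum_T a_T = \sum_\ka a^\la_\ka\sum_{T'\in\mathcal{T}^\la_\ka} a_{T'}$ — hmm, this groups by the \emph{last} rather than first step; either way one recovers $n b^\la_\mu = \sum a^\la_\ka b^\ka_\mu$ and is back to the same ratio identity. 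So the substance is unavoidably the box-by-box computation above, and I would present it as a single lemma-style verification organized by the class of $s$ relative to $\mu$.
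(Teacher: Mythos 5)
Your proposal is correct and matches the paper's approach: the paper's entire proof is the one-sentence observation that the recurrence follows by splitting each chain $T$ from the proof of Theorem \ref{prop:leafskew} into its first $n-1$ steps ($\mu\subset:\cdots\subset:\ka$) followed by the last step $\ka\subset:\la$ --- which is exactly the single-step recursion $n\,b^\la_\mu=\sum_{\ka} a^\la_\ka\, b^\ka_\mu$ that you take as your starting point, and both routes reduce to the same ratio identity $a^\la_\ka K^\ka_\mu/K^\la_\mu=\prod_{s\in S_+}h^\la_\ka(s)\ell^\ka_\mu(s)$. The paper leaves that box-by-box verification just as implicit as you do, so your plan is, if anything, more explicit about where the real bookkeeping (the class changes of $J$-boxes and the origin of the $\ell^\ka_\mu$ factors) actually lives.
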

\begin{proof}
This recurrence follows from (the proof of) Theorem \ref{prop:leafskew} by decomposing the set of all ascending sequences $T$ of the form $\mu \subset: \cdots \subset: \la$ into a sequence $\mu \subset: \cdots \subset: \ka$ of length $(n-1)$ followed by a sequence $\ka \subset: \la$ of length $1$ .
\end{proof}

\begin{rem}
\label{rem:leafrec}
If we decompose $T$ into a sequence $\mu\subset:\nu$ of length $1$ followed by a sequence $\nu\subset: \cdots \subset: \la$ of length $(n-1)$, we obtain a new dual recurrence
\[
nrL_{\la/\mu}=\sum_{\nu}L_{\la/\nu}\prod_{s\in S_+} h^\nu_\mu(s)\ell^\la_\nu(s)\prod_{s\in J} \frac{c_\mu(s)c'_\mu(s)}{c_\nu(s)c'_\nu(s)},
\]
where the sum is now indexed by all partitions $\nu$ satisfying $\mu \subset: \nu \subset \la$. 
\end{rem}

\section{Critical Boxes and Hooks} \label{sec:crithook}

For the remainder of this paper, we fix partitions $\mu \subset \la$ with $|\la|-|\mu|=n$. Letting $N, S, R, C$ and $J$ be the sets from Definition \ref{inddecomp}, we will henceforth assume that skew diagram $S=\la/\mu$ consists of at most two rows $S_1$ and $S_2$. We call $S$ a {\em horizontal strip} if $S_1$ and $S_2$ do not share any columns. Let $u$ be the number of boxes in $S_1$ (the first row) and $d=n-u$ be the number of boxes in $S_2$ (the second row). By convention, if $S$ consists of a single row, then $S_2$ is empty and we have $u=n$ and $d=0$.

Note that $J$ is entirely contained within a single row, and when $S$ is a horizontal strip $J$ contains exactly $d$ boxes. On the other hand, if $S$ is not a horizontal strip, then $S_1$ and $S_2$ overlap in $m \geq 1$ columns and $J$ contains $d-m$ boxes. 

The rightmost box of $J$ plays an especially important role in our calculations. We note that its coordinates can be completely determined from the skew diagram $S$ (since it has the same row coordinates as the boxes in $S_1$ and the same column coordinate of the rightmost box in $S_2$).

\begin{defn} 
\label{def:crit}
The \textbf{critical box} of $S=\la/\mu$ is the rightmost box $x^{\ast}$ of the set $J$. The \textbf{critical hook} of $S=\la/\mu$ is the polynomial in $r$ given by
\[ 
y = y^\la_\mu(r) = \text{arm}_\mu(x^*)+r\cdot \text{leg}_\mu(x^*).
\]
\end{defn}

Note that the critical hook is not itself an upper or lower hook, but rather obtained from these hooks by removing the corner box.

\begin{center}
\begin{tabular}{lcl}
$\ytableaushort{~~~~~~,~{x^{*}}--~,~{|}~,~~}*[*(lightgray)]{0,4+1,0,1+1}$ &\qquad \qquad \qquad \qquad \qquad & 
$\ytableaushort{~~~~~~,~{x^{*}}~~~,~~\uparrow\uparrow,~}*[*(lightgray)]{0,2+3,1+3,0}$\\
$y = 2+r$ && $y =0$ \\
$m=0$ && $m=2$
\end{tabular}
\end{center}


We observe that if $m>0$, then $S_1$ and $S_2$ must be in successive rows. Therefore, $y$ is nonzero only if $m$ is zero, which in turn occurs if and only if $S$ is a horizontal strip. We therefore deal with the cases $m=0$ (the \emph{gap} case) and $m>0$ (the \emph{overlap} case) separately in the next two sections. In each case, our strategy is to find a suitable recurrence relation for the leaf $L_{\la/\mu}$ and to show that our formula from Theorem \ref{thm:main} satisfies this recurrence. 

\section{The Gap Case} \label{sec:horz}

Our goal in this section is to prove the following result.
\begin{thm} 
\label{thm:q}
Let $\mu \subset \la$ be partitions with $|\lambda|-|\mu| = n$ such that the skew diagram $\lambda/\mu$ is a horizontal strip consisting of two rows. The leaf $L_{\lambda/\mu}$ is given by
\[
L_{\lambda/\mu}=\sum^d_{k=0} \binom{d}{k}\prod_{i=0}^{k-1} (i+1-r)(i+r)\prod_{i=k+1}^d(y+d+r-i)(y+u+r+i),
\]
where $u$ and $d$ denote the number of boxes in the upper and lower rows of $\la/\mu$ respectively, while $y$ is the associated critical hook from Definition \ref{def:crit}.
\end{thm}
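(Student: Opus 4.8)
The plan is to prove Theorem \ref{thm:q} by induction on $n = |\lambda| - |\mu|$, using the leaf recurrence from Proposition \ref{prop:leafrec} (or possibly the dual recurrence in Remark \ref{rem:leafrec}). Since $\lambda/\mu$ is a horizontal strip with two rows, any adjacent $\kappa$ with $\mu \subset \kappa \subset: \lambda$ is obtained by deleting one box from the right end of either $S_1$ or $S_2$, so there are at most two terms in the sum $\sum_\kappa L_{\kappa/\mu}\prod_{s \in S_+} h^\lambda_\kappa(s)\ell^\kappa_\mu(s)$. Removing a box from the top row sends $(u,d,y) \mapsto (u-1, d, y)$ while removing a box from the bottom row sends $(u,d,y) \mapsto (u, d-1, y')$, where $y'$ is the critical hook of the smaller strip (a box to the right of $x^*$ gets deleted, so the arm of $x^*$ in the new $\mu$ increases by one, giving $y' = y+1$). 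First I would carefully identify, for each of these two moves, exactly which boxes of $S_+ = S \cup J$ lie in $C^\lambda_\kappa$ or $R^\lambda_\kappa$, and compute the corresponding products of $h^\lambda_\kappa \ell^\kappa_\mu$ factors explicitly in terms of $r$, $u$, $d$, and $y$; this is the combinatorial heart of the argument and where the arm/leg bookkeeping must be done with care.

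Once the two coefficient products are pinned down, substituting the conjectured formula for $L_{(u-1,d;0,y)}$ and $L_{(u,d-1;0,y+1)}$ into the recurrence reduces Theorem \ref{thm:q} to a purely algebraic identity: a claim that a certain combination of two sums of the stated form equals $n$ times the target sum. I would handle this by comparing coefficients of $\binom{d}{k}$-type terms, i.e., re-indexing each sum so that all three are expressed over a common summation variable, and then verifying the identity termwise. The binomial identity $\binom{d}{k} = \binom{d-1}{k} + \binom{d-1}{k-1}$ should be the main tool for matching the bottom-row term, while the top-row term contributes more uniformly since $d$ is unchanged; the shift $y \mapsto y+1$ in the bottom-row contribution is what makes the products $\prod(y+d+r-i)(y+u+r+i)$ telescope correctly against the extra factors coming from $\ell^\kappa_\mu$.

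For the base case I would take $d = 0$ (so $\lambda/\mu$ is a single row of $u$ boxes), where the formula collapses to the single term $k=0$, namely $\prod_{i=1}^{u}(y+r-i)(y+u+r+i)$ — wait, more precisely $\prod_{i=1}^{0}\cdots$ in the first product and $\prod_{i=1}^{u}(y+u+r-i)(y+u+r+i)$ — and check this against the known single-row binomial coefficient formula (which follows from iterating the adjacent case, Kaneko's formula \eqref{a_lamu}, along the one row). Alternatively $u=d=0$ gives $L = 1$ trivially, and one can induct up from there, which may be cleaner since it avoids invoking an external single-row formula.

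The step I expect to be the main obstacle is the explicit evaluation of $\prod_{s \in S_+} h^\lambda_\kappa(s)\ell^\kappa_\mu(s)$ for the bottom-row removal: here the deleted box sits inside $S_2$, the joint set $J$ changes, and the boxes of $J$ and the surviving boxes of $S_2$ all contribute $c_\mu, c'_\mu$-type factors whose arms and legs shift in a way that must be tracked against the critical box $x^*$. Getting these products into exactly the closed form that makes the subsequent algebraic identity transparent — in particular recognizing the factors $(i+1-r)(i+r)$ and the $y$-shift — is delicate, and I would expect to need a separate lemma isolating this computation before assembling the induction. The $u$–$d$ symmetry asserted at the end of the theorem statement I would defer, proving it afterward directly from the closed formula (as the excerpt's roadmap suggests), rather than building it into the induction.
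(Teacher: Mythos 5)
Your plan follows the paper's proof essentially exactly: the paper derives precisely the recurrence you describe (Proposition \ref{prop:leaf1}, with the branches $(u-1,d,y)$ and $(u,d-1,y+1)$ obtained by deleting the rightmost boxes of $S_1$ and $S_2$ and computing the $h$- and $\ell$-products over $S_+$), and then verifies that the closed formula satisfies it with the initial condition $\Q^0_0(y)=1$. The only divergence is tactical: where you propose matching $\binom{d}{k}$-terms via Pascal's identity, the paper instead substitutes $z=y+d+r$, $\theta=-(r-\tfrac12)^2$ so that $\prod_{i=0}^{k-1}(i+1-r)(i+r)$ becomes $q_k(\theta)=\prod_{i=0}^{k-1}(\theta+\rho_i^2)$, a basis in the independent variable $\theta$, and compares coefficients of $q_k$ through an auxiliary recurrence (Lemma \ref{lem:M}) --- a more structured realization of the same coefficient-comparison idea.
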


Under the assumptions of this theorem, the set $J$ contains $d$ boxes $x_1,\ldots,x_d$ in a single row and the critical box $x^*$ is $x_d$. Since the number of overlapping columns $m$ between $S_1$ and $S_2$ is zero, the skew diagram $\la/\mu$ is determined by $u,d,$ and the critical hook $y = y^\la_\mu(r)$. It will thus be convenient to denote the leaf $L_{\la/\mu}$ by $\Q^u_d(y)$ throughout this section.

\begin{prop} 
The leaf $\Q^u_d(y)$ satisfies the recurrence 
\begin{align*}
\alpha_0 \cdot \Q^u_d(y) = \alpha_1 \cdot \Q^{u-1}_{d}(y) + \alpha_2 \cdot \Q^{u}_{d-1}(y+1),
\end{align*}
where the polynomials $\alpha_\bullet = \alpha_\bullet(y,u,d,r)$ are given by 
\begin{align*}
\alpha_0(y,u,d,r) & = (u+d)(y+u+r), \\
\alpha_1(y,u,d,r) &= u(y+{u+d}+r), \text{ and}  \\
\alpha_2(y,u,d,r) &= d(y+r)(y+u+1)(y+u+2r),
\end{align*}
along with the initial condition $\Q^0_0(y)=1$.
\label{prop:leaf1}
\end{prop}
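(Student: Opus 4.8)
The plan is to derive the recurrence for $\Q^u_d(y)$ directly from the leaf recurrence of Proposition~\ref{prop:leafrec}, specialized to the horizontal-strip case. Concretely, I would take $\la$ with $\la/\mu$ a two-row horizontal strip having $u$ boxes in $S_1$ and $d$ boxes in $S_2$, and examine the partitions $\ka$ with $\mu \subset \ka \subset: \la$; there are exactly two such $\ka$, obtained by deleting the rightmost box of $S_1$ or the rightmost box of $S_2$ from $\la$. Call these $\ka^{(1)}$ (with skew shape $(u-1,d)$) and $\ka^{(2)}$ (with skew shape $(u,d-1)$). Deleting the last box of $S_2$ shortens that row by one, which moves the critical box up and changes the critical hook from $y$ to $y+1$ — this accounts for the shift $\Q^u_{d-1}(y+1)$ on the right-hand side. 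Thus $n L_{\la/\mu} = L_{\ka^{(1)}/\mu}\prod_{s\in S_+}h^\la_{\ka^{(1)}}(s)\ell^{\ka^{(1)}}_\mu(s) + L_{\ka^{(2)}/\mu}\prod_{s\in S_+}h^\la_{\ka^{(2)}}(s)\ell^{\ka^{(2)}}_\mu(s)$, with $n = u+d$.

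The core of the argument is then to evaluate the two products over $S_+ = S \cup J$ explicitly in terms of arms and legs, and to identify them (after clearing common factors) with the ratios $\alpha_1/\alpha_0$ and $\alpha_2/\alpha_0$. For the first product, corresponding to removing the last box of $S_1$: the box removed plays the role of the unique skew box for the adjacent step, so its row-mates in $\ka^{(1)}$ (the other $u-1$ boxes of $S_1$, together with the boxes of $J$ in that same row, i.e.\ all of $J$ since $J$ sits in the $S_1$-row) contribute $R$-type factors, and there are no $C$-type factors. One then tracks the $h$ factors (ratios of upper/lower hooks between $\ka^{(1)}$ and $\la$) and the $\ell$ factors (products of hooks of $\ka^{(1)}$ and $\mu$) box by box along that row; the arms decrease by fixed amounts as one moves left, so each product telescopes into a ratio of shifted factorials, which should collapse to the rational function $u(y+u+d+r)/\bigl((u+d)(y+u+r)\bigr)$. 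For the second product, removing the last box of $S_2$, the removed box's row-mates are the remaining $d-1$ boxes of $S_2$, and the analogous telescoping — now also involving the boxes of $J$, whose hooks change because the critical box shifts — should yield $d(y+r)(y+u+1)(y+u+2r)/\bigl((u+d)(y+u+r)\bigr)$. Finally, substituting $y \mapsto y+1$ into $\Q^u_{d-1}$ is exactly what matches the combinatorics of the shifted critical hook.

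The main obstacle will be the bookkeeping in these two telescoping products: one must carefully enumerate which boxes of $S_+$ actually acquire nontrivial $h$ or $\ell$ factors at each step, and correctly compute arms and legs of those boxes in $\mu$, $\ka^{(1)}$, $\ka^{(2)}$, and $\la$ — particularly for the boxes of $J$, whose legs (measured into the ambient partition) depend on the columns below and interact with the definition of the critical hook $y$. The $\Q^u_{d-1}(y+1)$ term is the delicate one, since here both the $J$-boxes and the critical box move, and one has to verify that the hook shifts assemble into precisely the three linear factors $(y+r)(y+u+1)(y+u+2r)$ rather than some other grouping. A clean way to organize this is to first prove the identities for the two $S_+$-products as standalone lemmas (in terms of $u,d,y,r$) and only then assemble them; I would also sanity-check the final recurrence against small cases, e.g.\ $d=0$ (where it must reduce to the one-row formula and to $\Q^0_0(y)=1$) and the first example in Table~\ref{tab:ex}. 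The initial condition $\Q^0_0(y)=1$ is immediate since $\la/\mu$ is then empty, so $b^\la_\mu = 1 = K^\la_\mu$.
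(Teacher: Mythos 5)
Your proposal is correct and takes essentially the same approach as the paper: both apply the recurrence of Proposition~\ref{prop:leafrec} to the two partitions $\ka$ obtained by deleting the rightmost box of $S_1$ or of $S_2$, identify the critical-hook shift $y\mapsto y+1$ in the latter case, and reduce the claim to evaluating the two products over $S_+$ and matching them with $\alpha_1/\alpha_0$ and $\alpha_2/\alpha_0$. The paper carries out only the $\alpha_2/\alpha_0$ computation (noting $S_+\cap C^\la_\nu=\{x_d\}$, $S_+\cap R^\la_\nu=S_2\cap\nu$, and $S_+\cap R^\nu_\mu=\{x_d\}$) and leaves the other as an exercise, which is exactly the bookkeeping your plan describes.
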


\begin{proof} The initial condition follows immediately from observing that when $n=u+d=0$, we have $\la=\mu$, and the sets $R$, $C$ and $J$ are empty. In this case, the binomial coefficient $b^\la_\mu$ and its stem $K^\la_\mu$ both trivially equal $1$, so $L_{\la/\mu} = Q_0^0(y) = 1$. 

Now suppose $n>0$, and let $\ka$ and $\nu$ be the partitions whose Young diagrams are obtained from $\la$ by removing the rightmost boxes of $S_1$ and $S_2$ respectively. Recall the recursive expression for the leaf from Proposition \ref{prop:leafrec}:
\begin{align}
\label{eq:L_lamu_recur}
n L_{\la/\mu} &= L_{\ka/\mu}\prod_{s\in S_+} h^\la_{\ka}(s)\ell^{\ka}_\mu(s) + L_{\nu/\mu}\prod_{s\in S_+} h^\la_\nu(s)\ell^\nu_\mu(s).
\end{align}
Since $\ka/\mu$ has the same critical hook as $\la/\mu$ but one fewer box in the upper row, we obtain $L_{\ka/\mu}=\Q^{u-1}_d(y)$. On the other hand, $\nu/\mu$ has one fewer box in the lower row, so its critical hook is $y+1$, and we have $L_{\nu/\mu}=\Q^{u}_{d-1}(y+1)$. We conclude the argument by establishing  
\[
\frac{\alpha_2(y,u,d,r)}{\alpha_0(y,u,d,r)} = \frac{1}{n}\prod_{s\in S_+} h^\la_\nu(s)\ell^\nu_\mu(s),
\] and leave the (easier) verification involving $\alpha_1/\alpha_0$ as an exercise. To this end, note that 
\[
S_+ \cap C^\la_\nu  = \{x_d\} \text{ and } S_+ \cap R^\la_{\nu} = S_2 \cap \nu, 
\] so we have an expression for the $h$-product
\[
\prod_{s\in S_+} h^{\la}_{\nu}(s) =  d\frac{c_\la(x_d)}{c_{\nu}(x_d)}=\frac{d(y+u+2r)}{(y+u+r)}.
\]
And similarly, 
\[
S_+ \cap C^{\nu}_\mu =  \varnothing  \text{ and } S_+ \cap R^{\nu}_\mu = \{x_d\},
\] which yields an expression for the $\ell$-product
\[
\prod_{s\in S_+} \ell^{\nu}_{\mu}(s) = c_{\mu}(x_d)c'_\nu(x_d)=(y+u+1)(y+r).
\]
The desired result now follows from multiplying the $h$ and $\ell$-products described above, and recalling that $n = u+d$.
\end{proof}

\subsection{Proof of Theorem \ref{thm:q}}

We will describe a general family of polynomials indexed by $u$ and $d$ and show that they can be suitably modified to satisfy the same recurrence as $\Q^u_d(y)$ from Proposition \ref{prop:leaf1}.

\begin{defn} Let $\M^u_d$ be the bivariate polynomial given by
\[
\M^u_d(z;\theta)=\sum^d_{k=0} \binom{d}{k}  \prod^{k-1}_{i=0} (\theta+\rho_i^2)\prod_{i=k+1}^d(z-i)(z+i+u-d),
\]
where $\rho_i$ equals $i+\frac{1}{2}.$
\label{defn:M}
\end{defn}

We now claim the following.

\begin{prop} 
The polynomials $\M^u_d$ satisfy the recurrence
\[
\alpha'_0\cdot \M^u_d = \alpha'_1\cdot \M^{u-1}_{d}+ \alpha'_2\cdot \M^{u}_{d-1},
\]
where the polynomials $\alpha'_\bullet = \alpha'_\bullet(u,d,z,\theta)$ are given by 
\begin{align*}
\alpha'_0(u,d,z,\theta) &= (u+d)(z+u-d), \\
\alpha'_1(u,d,z,\theta) &= u(z+u),\text{ and} \\
\alpha'_2(u,d,z,\theta) &= d(z-d)(\theta + (z+\rho_{u-d})^2),
\end{align*}
along with the initial condition $\M^0_0=1$.
\label{prop:M}
\end{prop}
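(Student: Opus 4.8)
The initial condition is immediate: for $u=d=0$ the defining sum has only the $k=0$ term, whose two products are both empty, so $\M^0_0=1$. For the recurrence the plan is a direct computation. Abbreviate $A_k=\prod_{i=0}^{k-1}(\theta+\rho_i^2)$, which involves neither $u$ nor $d$, and $B^{u,d}_k=\prod_{i=k+1}^{d}(z-i)(z+i+u-d)$, so that $\M^u_d=\sum_{k=0}^{d}\binom{d}{k}A_kB^{u,d}_k$. Setting $w=z+u-d$, two one-line telescoping computations record how the $B$-products transform under the shifts occurring in the recurrence,
\[
B^{u-1,d}_k=\frac{w+k}{w+d}\,B^{u,d}_k,\qquad B^{u,d-1}_k=\frac{B^{u,d}_k}{(z-d)(w+k+1)},
\]
and alongside these we have the ``peeling'' identities $A_{k+1}=(\theta+\rho_k^2)A_k$ and $B^{u,d}_k=(z-k-1)(w+k+1)B^{u,d}_{k+1}$ (valid for $0\le k\le d-1$).

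The next step is to substitute these into $\alpha'_1\M^{u-1}_d+\alpha'_2\M^u_{d-1}$. Using $z+u=w+d$ and $\theta+(z+\rho_{u-d})^2=\theta+(w+\tfrac12)^2$, the factor $w+d$ cancels from $\alpha'_1$ and $z-d$ from $\alpha'_2$; after transferring the $\alpha'_1$-term to the left side (where the $uw$ contributions cancel), the claimed recurrence collapses to the single identity
\[
\sum_{k=0}^{d}\binom{d}{k}A_kB^{u,d}_k\,(dw-uk)=d\bigl(\theta+(w+\tfrac12)^2\bigr)\sum_{k=0}^{d-1}\binom{d-1}{k}A_k(z-k-1)B^{u,d}_{k+1}.
\]
Applying Pascal's rule $\binom{d}{k}=\binom{d-1}{k}+\binom{d-1}{k-1}$ on the left, reindexing the second piece, and using the peeling identities, the left side becomes $\sum_{k=0}^{d-1}\binom{d-1}{k}A_kB^{u,d}_{k+1}\Phi_k$ with
\[
\Phi_k=(z-k-1)(w+k+1)(dw-uk)+(\theta+\rho_k^2)\bigl(dw-u(k+1)\bigr),
\]
so that the identity to be established is $\sum_{k=0}^{d-1}\binom{d-1}{k}A_kB^{u,d}_{k+1}\bigl(\Phi_k-d(\theta+(w+\tfrac12)^2)(z-k-1)\bigr)=0$. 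This I would verify by a direct (finite) computation --- expanding $B^{u,d}_{k+1}$ as a product and matching coefficients of the monomials $z^a\theta^b$ --- or, equivalently, by an induction on $d$.

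The main obstacle is the bookkeeping, and the one genuinely delicate point is this last step: the bracketed polynomials $\Phi_k-d(\theta+(w+\tfrac12)^2)(z-k-1)$ do \emph{not} vanish individually --- their coefficient of $\theta$ works out to $(d-u)(k+1-d)$, which is generically nonzero --- so the vanishing of the sum is a global phenomenon, and one has to track how the summands combine rather than cancel them one at a time. As a cross-check and an alternative, one may observe that packaging the three products into Pochhammer symbols (with $\theta=-\beta^2$, so $A_k=(\tfrac12-\beta)_k(\tfrac12+\beta)_k$ and $\prod_{i=k+1}^d(z-i)=(-1)^{d-k}(1-z)_d/(1-z)_k$) identifies $\M^u_d$ with $(-1)^d(1-z)_d(z+u-d+1)_d$ times a terminating ${}_3F_2$ at argument $1$; then $\M^u_d$, $\M^{u-1}_d$, $\M^u_{d-1}$ are contiguous and the stated recurrence is one of the classical three-term contiguous relations, which could be invoked in place of the hand computation.
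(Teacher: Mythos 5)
Your reduction is correct as far as it goes, and it runs on the same rails as the paper's own computation: your $A_k$ are the paper's basis $q_k(\theta)$, your $B^{u,d}_k$ together with the shift and peeling identities reproduce its manipulations of $f^u_d(k)$, and your ``collapsed'' identity is an accurate restatement of what must be shown. But the proof stops exactly at the decisive step. The identity
\[
\sum_{k=0}^{d-1}\binom{d-1}{k}A_kB^{u,d}_{k+1}\Bigl(\Phi_k-d\bigl(\theta+(w+\tfrac12)^2\bigr)(z-k-1)\Bigr)=0
\]
is asserted, not proven, and neither fallback you offer closes it: matching coefficients of $z^a\theta^b$ is a finite check only for each fixed $d$ (the coefficients are sums over $k$ whose vanishing is precisely the content of the identity, so nothing has been reduced), and the ``induction on $d$'' is not set up --- you would need to relate the $d$ and $d-1$ instances of the identity, which you do not do. Since you yourself observe that the bracketed summands do not vanish termwise (their $\theta$-coefficient is $(d-u)(k+1-d)$, which I confirm), this is a genuinely nontrivial global cancellation, and leaving it unverified is a real gap, not a routine omission.

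The missing move is to push the expansion in the basis $\{A_k\}$ one step further rather than leaving $\theta$ inside the bracket. Write $\theta+(w+\tfrac12)^2=(\theta+\rho_k^2)+\bigl((w+\tfrac12)^2-\rho_k^2\bigr)$ in the $k$-th summand and absorb $(\theta+\rho_k^2)A_k=A_{k+1}$; after reindexing, the whole expression becomes $\sum_k(\cdots)A_k$ with coefficients that are polynomials in $z$ and $u$ alone, and since the $A_k$ are linearly independent over $\mathbb{Q}(z,u,d)$ each coefficient must vanish --- and each does, by exactly the three ``elementary calculations'' that the paper records. This is how the paper closes the argument: it first proves the two-term recurrence $\M^u_d=\bigl(\theta+(z+\rho_{u-d})^2\bigr)\M^u_{d-1}-u(z+u)\M^{u-1}_{d-1}$ of Lemma \ref{lem:M} by comparing coefficients of $q_k$, and then derives the three-term recurrence of Proposition \ref{prop:M} from it. Your ${}_3F_2$ observation is a legitimate alternative route (the identification of $\M^u_d$ with $(-1)^d(1-z)_d(z+u-d+1)_d$ times a terminating ${}_3F_2(1)$ is correct), but as written it is only a pointer: one still has to exhibit the specific contiguous relation and match its coefficients to $\alpha'_0,\alpha'_1,\alpha'_2$ after stripping the Pochhammer prefactors, which is work comparable to finishing the hand computation.
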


This recurrence given in Proposition \ref{prop:M} can be modified to the recurrence satisfied by $\Q^u_d$ in Theorem \ref{thm:q} by making the following substitutions:
\begin{align*}
 z &= y+d+r, \text{ and} \\ 
\theta &= -\left(r-\frac{1}{2}\right)^2.
\end{align*} 
Therefore, in order to prove Theorem \ref{thm:q}, it suffices to prove Proposition \ref{prop:M}. To accomplish this, we first establish a different recurrence for $\M^u_d$.

\begin{lem} The polynomials $\M^u_d$ satisfy the recurrence
\begin{equation}
\M^u_d=\left(\theta + (z+\rho_{u-d})^2\right)\M^u_{d-1}-u(z+u)\M^{u-1}_{d-1},
\label{eq:M}
\end{equation}
where $\M^u_{0}=1$ for all $u$.
\label{lem:M}
\end{lem}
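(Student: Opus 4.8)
The plan is to prove Lemma~\ref{lem:M} by induction on $d$, treating $u$ as a free parameter throughout, so that both polynomials appearing on the right-hand side of \eqref{eq:M} are already under control. The base case $d=0$ is immediate: when $d=0$ the sum defining $\M^u_0$ has a single term ($k=0$) equal to the empty product $1$, and for $d=1$ a direct two-term expansion of Definition~\ref{defn:M} should match $\left(\theta+(z+\rho_{u-1})^2\right)\cdot 1 - u(z+u)\cdot 1$ after simplification. For the inductive step I would take the explicit sum $\M^u_d=\sum_{k=0}^d \binom{d}{k}\prod_{i=0}^{k-1}(\theta+\rho_i^2)\prod_{i=k+1}^d (z-i)(z+i+u-d)$ and work directly with it, rather than invoking any earlier recurrence.

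The key algebraic manipulation is to split the binomial coefficient via Pascal's rule $\binom{d}{k}=\binom{d-1}{k}+\binom{d-1}{k-1}$, which separates $\M^u_d$ into two sums. In the first sum (the $\binom{d-1}{k}$ part) the upper limit of the inner product $\prod_{i=k+1}^{d}$ still runs to $d$, so I would peel off the $i=d$ factor $(z-d)(z+u)$; in the second sum (the $\binom{d-1}{k-1}$ part), after reindexing $k\mapsto k+1$, the product $\prod_{i=0}^{k-1}(\theta+\rho_i^2)$ gains an extra factor, which I would peel off as $(\theta+\rho_{k}^2)$ at the top — but the cleaner route is to peel from the other end and recognize the telescoping-style identity that matches the shift $d\mapsto d-1$ together with the change $z+i+u-d \to z+i+u-(d-1)$ hidden in the definition of $\M^u_{d-1}$. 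The bookkeeping here is that $\M^u_{d-1}$ uses the exponent offset $u-(d-1)=u-d+1$, so the factors $(z+i+u-d)$ in $\M^u_d$ are \emph{not} literally the factors appearing in $\M^u_{d-1}$; reconciling this shift is where the substantive work lies. I expect that after carefully matching $\rho_{u-d}$ against $\rho_{u-1}$-type quantities and collecting the leftover factors, the two pieces assemble into exactly $\left(\theta+(z+\rho_{u-d})^2\right)\M^u_{d-1}$ and $-u(z+u)\M^{u-1}_{d-1}$.

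An alternative, perhaps more robust, approach is to verify \eqref{eq:M} as a polynomial identity in $z$ (with $\theta,u,d$ as parameters) by checking it at sufficiently many values of $z$, exploiting that each side has bounded degree in $z$ — degree $2d$ — so that $2d+1$ evaluations suffice. Natural evaluation points are $z=1,2,\ldots,d$ (where many factors $(z-i)$ vanish) and $z=-u,-u-1,\ldots$; at these points the sums collapse dramatically and the identity becomes a finite check. I would keep this in reserve in case the direct Pascal-rule computation becomes unwieldy.

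The main obstacle is precisely the mismatch in the "offset" parameter: the family $\M^u_d$ carries the hidden dependence $u-d$ inside the second product $\prod(z+i+u-d)$, so passing from $d$ to $d-1$ shifts \emph{every} factor in that product, not just the endpoint. Handling this requires either a clever regrouping that absorbs the global shift, or an auxiliary lemma rewriting $\prod_{i=k+1}^d (z+i+u-d)$ in terms of $\prod_{i=k+1}^{d-1}(z+i+u-d+1)$ times a correction. Once that identity is isolated and proved (a short induction or a direct product-telescoping argument), the rest of the proof of Lemma~\ref{lem:M} should reduce to routine collection of terms, and then Proposition~\ref{prop:M} — hence Theorem~\ref{thm:q} — follows by feeding \eqref{eq:M} into the three-term recurrence and checking the coefficient identities $\alpha'_\bullet$ after the stated substitutions $z=y+d+r$, $\theta=-(r-\tfrac12)^2$.
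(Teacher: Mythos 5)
Your overall strategy---expand the explicit sums and match coefficients---is the same in spirit as the paper's, but the decisive computation is deferred rather than carried out, and the one idea that makes it close is missing. Two specific problems. First, the argument is not actually an induction on $d$: the identity \eqref{eq:M} relates three explicitly defined polynomials, and your "inductive step" never invokes the hypothesis for smaller $d$; this is harmless but signals that the proof is really a direct verification, which you then do not complete ("I expect that \ldots the two pieces assemble into exactly \ldots" is the entire content of the lemma). Second, and more substantively, you never explain how the $k$-dependent factors $(\theta+\rho_k^2)$ arising from your Pascal-rule split recombine into the single $k$-\emph{independent} prefactor $\theta+(z+\rho_{u-d})^2$ multiplying $\M^u_{d-1}$. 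This is the crux. The paper works in the basis $q_k(\theta)=\prod_{i=0}^{k-1}(\theta+\rho_i^2)$ and splits
\begin{equation*}
\theta+(z+\rho_{u-d})^2=(\theta+\rho_k^2)+\bigl((z+\rho_{u-d})^2-\rho_k^2\bigr),
\end{equation*}
so that the first piece shifts $q_k$ to $q_{k+1}$ while the second factors as $(z+u-d+k+1)(z+u-d-k)$; the factor $(z+u-d+k+1)$ is exactly the single-factor "correction" produced by your offset shift $u-d\mapsto u-d+1$ in $\prod_{i=k+1}^{d}(z+i+u-d)$, and this cancellation is what reduces everything to the three ratio identities $f^{u}_{d-1}(k-1)/f^{u}_{d}(k)$, $f^{u}_{d-1}(k)/f^{u}_{d}(k)$, $f^{u-1}_{d-1}(k)/f^{u}_{d}(k)$ whose numerators sum to $d(z-d)$. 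Your auxiliary product-rewriting lemma is correct and trivial to prove, but without pairing its correction factor against the difference of squares, the "routine collection of terms" will not terminate.

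Your fallback---verifying \eqref{eq:M} as a polynomial identity in $z$ of degree $\le 2d$ by evaluating at $2d+1$ points---does not rescue the argument as stated, because $d$ is a free parameter: at $z=j$ with $1\le j\le d$ only the terms with $k\ge j$ vanish from the sum, so each evaluation still leaves a sum of length growing with $d$ that must be computed in closed form for all $d$. That is not a finite check but essentially the original problem in disguise.
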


\begin{proof} For brevity, we define $q_k(\theta) = \prod^{k-1}_{i=0} (\theta+\rho_i^2)$ and 
\[
f^u_d(k) = \binom{d}{k} \prod_{i=k+1}^d(z-i)(z+i+u-d), 
\]
so $\M^u_d=\sum^d_{k=0} f^u_d(k)q_k(\theta)$. Note that the $q_k(\theta)$ form a basis for polynomials in $\theta$ over the field $\mathbb{Q}(z,u,d).$ In fact, we will find it convenient to suppress $\theta$, and write $q_k=q_k(\theta)$. It now suffices to show that the coefficient of $q_k$ on the right side of (\ref{eq:M}) also equals $f^u_d(k)$. 

Expanding in terms of the $f$'s and $q$'s, the right side of (\ref{eq:M}) equals
\begin{equation}
\left(\theta + (z+\rho_{u-d})^2\right)\sum_{k=0}^{d-1} f^{u}_{d-1}(k)q_k-u(z+u)\sum_{k=0}^{d-1} f^{u-1}_{d-1}(k)q_k.
\label{eq:rM1}
\end{equation}
We can absorb the expression $\theta + (z+\rho_{u-d})^2$ outside the first sum into the sum, and then for each $k$, we may rewrite this expression as 
\[
(\theta+\rho_k^2) + \left((z+\rho_{u-d})^2-\rho_k^2\right).  
\]
With this modification, (\ref{eq:rM1}) becomes
\begin{equation}
\sum^{d-1}_{k=0} f^{u}_{d-1}(k)q_{k+1} +\sum^{d-1}_{k=0}\left( (z+\rho_{u-d})^2-\rho_{k}^2\right)f^{u}_{d-1}(k)q_k-\sum^{d-1}_{k=0} u(z+u)f^{u-1}_{d-1}(k)q_k.
\label{eq:rM2}
\end{equation}
Collecting the coefficients of $q_k$ in (\ref{eq:rM2}), we have
\begin{equation}
f^{u}_{d-1}(k-1)+\left( (z+\rho_{u-d})^2-\rho_{k}^2\right) f^{u}_{d-1}(l)-u(z+u)f^{u-1}_{d-1}(k).
\label{eq:rM3}
\end{equation}
In order to establish (\ref{eq:M}), we must now show that expression (\ref{eq:rM3}) equals $f^u_d(k).$

Elementary calculations express each term in (\ref{eq:rM3}) as a multiple of $f^u_d(k)$:
\begin{align*}
f^{u}_{d-1}(k-1) &= \frac{k(z-k)}{d(z-d)} \cdot f^{u}_d(k), \\
\left((z+\rho_{u-d})^2-\rho_{k}^2\right) \cdot f^{u}_{d-1}(k) &= \frac{(d-k)(z+u-d-k)}{d(z-d)}\cdot f^{u}_d(k), \\
u(z+u) \cdot f^{u-1}_{d-1}(k) &= \frac{u(d-k)}{d(z-d)} \cdot f^{u}_d(k).
\end{align*}
(Note that the second calculation requires factoring the expression $(z+\rho_{u-d})^2-\rho_k^2$ as $(z+\rho_{u-d}+\rho_k)(z+\rho_{u-d}-\rho_k)$.) Subtracting the third quantity from the sum of the first two now yields $f^u_d(k)$ on the right side of (\ref{eq:M}) and hence concludes the argument.
\end{proof}

Although the recurrence of Lemma \ref{lem:M} looks quite different from the one in Proposition \ref{prop:M} at first glance, it can now be used to yield the desired proof.
%

\begin{proof}[Proof of Proposition \ref{prop:M}.] We expand the difference $D = \alpha'_0\cdot \M^u_d - \alpha'_1\cdot \M^{u-1}_d$, using the notation $q_k$ and $f^u_d(k)$ introduced the proof of Lemma \ref{lem:M}, to get
\[ 
D = \sum_{k=0}^{d} \left((u+d)(z+u-d) f^{u}_d(k) q_k- u(z+u)f^{u-1}_d(k) q_k\right)
\]
It is readily checked that $(z+u)f^{u-1}_d(k)$ equals $(z+k+u-d)f^u_d(k)$, so we obtain
\begin{align*}
D &= \sum_{k=0}^{d} \big( (u+d)(z+u-d)-u(z+k+u-d) \big) f^{u}_d(k) q_k \\
	  &= d(z-d)\M^u_d + \sum_{k=0}^{d-1} u(d-k) f^{u}_d(k) q_k.
\end{align*}
By the third elementary calculation mentioned at the end of the proof of Lemma \ref{lem:M}, we know $\frac{u(d-k)}{d(z-d)} \cdot f^{u}_d(k)$ equals $u(z+u) \cdot f^{u-1}_{d-1}(k)$, so we obtain
\begin{align*}
D &= d(z-d)\M^u_d + \sum_{k=0}^{d-1} d(z-d)u(z+u) f^{u-1}_{d-1}(k) q_k \\
		&= d(z-d)\left[\M^u_d + u(z+u)\M^{u-1}_{d-1}\right].
\end{align*}
Applying Lemma \ref{lem:M} yields $D = \alpha'_2\cdot \M^u_{d-1}$, as desired. 
\end{proof}
%

\subsection{Symmetry between $u$ and $d$}

We now consider some algebraic identities related to the polynomial expression for the leaf $L^\lambda_\mu$ from Theorem \ref{thm:q}, i.e.,
\[
\Q^u_d(y) = \sum^d_{k=0} \binom{d}{k}\prod_{i=0}^{k-1} (i+1-r)(i+r)\prod_{i=k+1}^d(y+d+r-i)(y+u+r+i).
\]


The parameters $u$ and $d$ appear to be playing remarkably different roles in the recursion of Proposition \ref{prop:leafrec} and in our explicit formula from Theorem \ref{thm:q}. However, we find a surprising symmetry between $u$ and $d$ involving the polynomials 
\[
\phi_k(y)=\displaystyle \prod_{i=1}^k (y+i)(y+i-1+2r).
\]
We observe that setting $u=0$ in the recurrence of Proposition \ref{prop:leaf1} yields 
\[
\Q^0_d(y) = (y+1)(y+2r)\Q^{0}_{d-1}(y+1), \text{ and }\Q^0_0(y) = 1.
\]
Since $\phi_d$ also satisfies this recurrence, it follows that $\phi_d(y) = \Q^0_d(y)$. Moreover, the following $\phi$-identities are easy to verify
\begin{align}
\phi_d(y) &=(y+1)(y+2r) \cdot  \phi_{d-1}(y+1),  \label{eq:phid1}\\
\phi_d(y) &=(y+d)(y+d+1+2r) \cdot  \phi_{d-1}(y),  \label{eq:phid2} \text{ and}\\
\phi_d(y+1) &= \frac{(y+d+1)(y+d+2r)}{(y+1)(y+2r)} \cdot \phi_d(y). \label{eq:phid3}
\end{align}

\begin{thm} 
The polynomials $\Q^u_d$ and $\phi_d$ satisfy $\Q^u_d(y)\cdot \phi_u(y)=\Q^d_u(y)\cdot \phi_d(y)$.
\label{thm:symQ}
\end{thm}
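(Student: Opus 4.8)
The plan is to establish the symmetry $\Q^u_d(y)\,\phi_u(y)=\Q^d_u(y)\,\phi_d(y)$ by induction on $u+d$, using the recurrence of Proposition \ref{prop:leaf1} to reduce both sides. Define $G^u_d(y) := \Q^u_d(y)\,\phi_u(y)$; the goal is to show $G^u_d = G^d_u$. The base case $u=d$ is trivial, and by symmetry of the desired statement we may assume $u>d$. Since $\phi_u$ does not depend on the ``lower-row'' structure, the recurrence $\alpha_0\,\Q^u_d = \alpha_1\,\Q^{u-1}_d + \alpha_2\,\Q^u_{d-1}(y{+}1)$ immediately gives
\[
(u+d)(y+u+r)\,G^u_d(y) = u(y+u+d+r)\,\frac{\phi_u(y)}{\phi_{u-1}(y)}\,G^{u-1}_d(y) + d(y+r)(y+u+1)(y+u+2r)\,\frac{\phi_u(y)}{\phi_u(y+1)}\,G^u_{d-1}(y+1).
\]
Using \eqref{eq:phid2} to rewrite $\phi_u(y)/\phi_{u-1}(y) = (y+u)(y+u+1+2r)$ and \eqref{eq:phid3} to rewrite $\phi_u(y)/\phi_u(y+1) = (y+1)(y+2r)/\bigl((y+u+1)(y+u+2r)\bigr)$, this becomes a clean three-term recurrence for $G$ with explicit polynomial coefficients — crucially, one in which the coefficient attached to $G^u_{d-1}(y+1)$ simplifies, after cancelling $(y+u+1)(y+u+2r)$, to $d(y+r)(y+1)(y+2r)$.

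Next I would write down the corresponding recurrence for the right-hand side $G^d_u(y) = \Q^d_u(y)\,\phi_d(y)$, obtained from Proposition \ref{prop:leaf1} with the roles of $u$ and $d$ swapped (so now the ``upper'' index is $d$ and the ``lower'' index is $u$, and the critical hook argument shifts in the $u$-decrement term):
\[
(u+d)(y+d+r)\,\Q^d_u(y) = d(y+u+d+r)\,\Q^{d-1}_u(y) + u(y+r)(y+d+1)(y+d+2r)\,\Q^d_{u-1}(y+1).
\]
Multiplying through by $\phi_d(y)$ and again using the $\phi$-identities \eqref{eq:phid1}--\eqref{eq:phid3} to distribute the $\phi$ factors appropriately (note $\phi_d(y)$ is unchanged when the $u$-index drops, and picks up a factor via \eqref{eq:phid3} when $y\mapsto y+1$), I get a second three-term recurrence, this time expressing $G^d_u(y)$ in terms of $G^{d-1}_u(y)$ and $G^d_{u-1}(y+1)$. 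By the induction hypothesis $G^{d-1}_u = G^u_{d-1}$ and $G^{d-1}_u(y+1)$-type terms match $G^{u}_{d-1}(y+1) = G^{d-1}_u(y+1)$, etc., so both recurrences ultimately express $G^u_d(y)$ and $G^d_u(y)$ in terms of the same lower-order quantities.

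The main obstacle — and the only real computation — is verifying that after clearing all $\phi$ ratios, the two recurrences coincide coefficient-by-coefficient, i.e. that
\[
(u+d)(y+u+r)\cdot(\text{something}) \quad\text{and}\quad (u+d)(y+d+r)\cdot(\text{something})
\]
produce matching linear combinations of $G^{u-1}_d(y)=G^d_{u-1}(y)$ and $G^u_{d-1}(y+1)=G^{d-1}_u(y+1)$. The leading factors $(y+u+r)$ versus $(y+d+r)$ do not agree, so the identification is not term-by-term in the naive sense; instead one must check that the two distinct three-term recurrences, together with the common initial data $G^u_0 = \Q^u_0\,\phi_u = \phi_u$ (using $\Q^u_0 = 1$) and $G^0_d = \Q^0_d = \phi_d$, have the same solution. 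A cleaner route that avoids this: observe that $G$ satisfies the \emph{two}-term ``$\M$-style'' recurrence analogous to Lemma \ref{lem:M} — translating $\M^u_d = (\theta + (z+\rho_{u-d})^2)\M^u_{d-1} - u(z+u)\M^{u-1}_{d-1}$ back through the substitutions $z = y+d+r$, $\theta = -(r-\tfrac12)^2$ gives $\Q^u_d(y) = (y+u+d+r)(y+2r-1)\,\Q^u_{d-1}(y+1)\cdot(\ldots)$ — and then multiply by $\phi$ using \eqref{eq:phid1}--\eqref{eq:phid3} to show directly that $G^u_d(y) = G^u_d(y)$ is forced to be symmetric. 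I expect the bookkeeping of which $\phi$-identity applies in which term to be the fiddly part, but no genuinely hard step is involved once the recurrences are lined up; it is essentially a matching of two linear recurrences with the same initial conditions.
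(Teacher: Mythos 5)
Your induction does not close as written, and you have in fact put your finger on exactly why: using only the recurrence of Proposition \ref{prop:leaf1} for both $\Q^u_d$ and $\Q^d_u$, neither the leading coefficients ($(y+u+r)$ versus $(y+d+r)$) nor, more importantly, the arguments of the lower-order terms line up. The recurrence for $\Q^u_d(y)$ involves $\Q^{u-1}_d(y)$ and $\Q^u_{d-1}(y+1)$, while the $u\leftrightarrow d$ swap of that same recurrence expresses $\Q^d_u(y)$ in terms of $\Q^{d-1}_u(y)$ and $\Q^d_{u-1}(y+1)$, whose images under the inductive hypothesis are $\Q^u_{d-1}(y)$ and $\Q^{u-1}_d(y+1)$ --- the $y$ and $y+1$ shifts land on the wrong terms. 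So the two three-term relations involve genuinely different lower-order data, and the fallback of ``two recurrences with the same initial conditions have the same solution'' is not available: they are not the same recurrence, and showing they nevertheless have the same solution is essentially the theorem itself. Your alternative route via Lemma \ref{lem:M} is not a proof either: the translation you write down is incorrect (with $z=y+d+r$ and $\theta=-(r-\tfrac12)^2$ the coefficient $\theta+(z+\rho_{u-d})^2$ becomes $(y+u+1)(y+u+2r)$, and decrementing $d$ at fixed $z$ shifts $y$ to $y+1$ in \emph{both} terms on the right), and the concluding claim that ``$G^u_d(y)=G^u_d(y)$ is forced to be symmetric'' is not an argument.

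The missing idea is the \emph{dual} recurrence of Remark \ref{rem:leafrec}: by adding boxes to $\mu$ rather than deleting them from $\lambda$, the paper derives a second three-term relation $\bar{\alpha}_0\cdot\Q^u_d(y)=\bar{\alpha}_1\cdot\Q^{u-1}_d(y+1)+\bar{\alpha}_2\cdot\Q^u_{d-1}(y)$, in which the $y\mapsto y+1$ shift sits on the \emph{other} term. Swapping $u\leftrightarrow d$ in this dual relation produces $\Q^{d-1}_u(y+1)$ and $\Q^d_{u-1}(y)$, which the inductive hypothesis converts --- after the $\phi$-identities (\ref{eq:phid1})--(\ref{eq:phid3}) --- into exactly the combination $d(y+r)(y+u+1)(y+u+2r)\cdot\Q^u_{d-1}(y+1)+u(y+u+d+r)\cdot\Q^{u-1}_d(y)$ appearing on the right side of Proposition \ref{prop:leaf1}; and the leading coefficients now agree, since $\bar{\alpha}_0(y,d,u,r)=(u+d)(y+u+r)=\alpha_0(y,u,d,r)$. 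Without this second recurrence the matching you need cannot be carried out, so the proposal has a genuine gap.
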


\begin{proof}
We first observe that a new recurrence for $\Q^u_d(y)$ may be derived by modifying the proof of Proposition \ref{prop:leaf1} as follows. Instead of constructing the intermediate partitions $\kappa$ and $\nu$ by removing a box each from $\lambda$, we produce $\kappa$ by adding the leftmost box of the top row $S_1$ to $\mu$, and produce $\nu$ by adding the rightmost box of $S_2$ to $\mu$. And rather than using the recurrence from Proposition \ref{prop:leafrec}, we employ the dual recurrence of Remark \ref{rem:leafrec}. With these alterations in place, and using precisely the same strategy as before, one obtains
\[
\bar{\alpha}_0 \cdot \Q^u_d(y) = \bar{\alpha}_1 \cdot \Q^{u-1}_{d}(y+1) + \bar{\alpha}_2 \cdot \Q^{u}_{d-1}(y),
\]
with
\begin{align*}
\bar{\alpha}_0(y,u,d,r) &= (u+d)(y+d+r),\\
\bar{\alpha}_1(y,u,d,r) &= u(y+r)  \text{, and} \\
\bar{\alpha}_2(y,u,d,r) &= d(y+u+d+r)(y+d-1+2r)(y+d),
\end{align*}
along with the initial condition $\Q^0_0(y)=1$. 

We now use this dual recurrence to prove the desired symmetry. We proceed by induction on $n=u+d$, noting that the result holds trivially for $u=d=0$. For $n > 0$, consider our new recurrence with the roles of $u$ and $d$ interchanged:
\[
\bar{\alpha}_0(y,d,u,r)\cdot \Q^d_u(y) = \bar{\alpha}_1(y,d,u,r)\cdot\Q^{d-1}_{u}(y+1) + \bar{\alpha}_2(y,d,u,r)\cdot\Q^{d}_{u-1}(y).
\]
Multiplying throughout by $\phi_d(y)$ and using identity (\ref{eq:phid1}) with the first term on the right side, we note that $\bar{\alpha}_0(y,d,u,r)\cdot\Q^d_u(y)\phi_d(y)$ equals  
\[
\bar{\alpha}_1(y,d,u,r) (y+1)(y+2r)\cdot\Q^{d-1}_{u}(y+1)\phi_{d-1}(y+1) + \bar{\alpha}_2(y,d,u,r) \cdot \Q^{d}_{u-1}(y)\phi_d(y).
\]
And applying the inductive hypothesis to the expression above yields
\[
\bar{\alpha}_1(y,d,u,r)(y+1)(y+2r)\cdot\Q^{u}_{d-1}(y+1)\phi_{u}(y+1) + \bar{\alpha}_2(y,d,u,r)\cdot\Q^{u-1}_{d}(y)\phi_{u-1}(y).
\]
Identities (\ref{eq:phid2}) and (\ref{eq:phid3}) may now be used to express both summands above in terms of $\phi_u(y)$. Using these identities and the definition of the $\bar{\alpha}_\bullet$'s, our expression for $\bar{\alpha}_0(y,d,u,r) \cdot \Q^d_u\phi_d(y)$ simplifies to
\[
\left[d(y+r)(y+u+1)(y+u+2r)\cdot\Q^{u}_{d-1}(y+1) + u(y+u+d+r)\cdot\Q^{u-1}_{d}(y) \right]\phi_{u}(y).
\]
By Proposition \ref{prop:leaf1}, the factor within square brackets equals $\alpha_0(y,u,d,r)\cdot \Q^u_d(y)$. But since 
\[
\alpha_0(y,u,d,r) = (u+d)(y+u+r) = \bar{\alpha}_0(y,d,u,r), 
\]
the desired result follows.
\end{proof}

\section{The Overlap Case} \label{sec:overlap}

 This section is devoted to proving the following result.
\begin{thm} 
\label{thm:p}
Let $\mu \subset \la$ be partitions with $|\lambda|-|\mu| = n$ so that the skew diagram $\lambda/\mu$ consists of two rows which overlap along $m \geq 1$ columns. Then, the leaf $L_{\lambda/\mu}$ is given by
\[
L_{\lambda/\mu}=\sum^{d-m}_{k=0} \binom{d-m}{k}\prod_{i=0}^{k-1} (m+i+1-r)(i+r)\prod_{j=k+1}^{d-m}(d-m+r-j)(u+r+j),
\]
where $u$ and $d$ denote the number of boxes in the upper and lower rows of $\la/\mu$ respectively.
\end{thm}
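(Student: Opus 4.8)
The plan is to mirror the treatment of the gap case in Section~\ref{sec:horz}: distill a recurrence for the leaf from the leaf recursion of Proposition~\ref{prop:leafrec}, exhibit an explicit polynomial family satisfying that recurrence, and take the gap case (Theorem~\ref{thm:q}) as the base of an induction. Throughout, $S_1$ (the top row, $u$ boxes) and $S_2$ (the bottom row, $d$ boxes) occupy consecutive rows $i,i+1$, and I will keep track of $d'=d-m$, which is preserved by both moves below.

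I would first derive the recurrence. The removable corners of $\lambda$ lying strictly above $\mu$ are (a) the rightmost box of $S_1$, which is a genuine removable corner exactly when $\lambda_i>\lambda_{i+1}$, i.e.\ when $u>m$, and whose deletion yields the overlap shape with data $(u-1,d,m)$; and (b) the rightmost box of $S_2$, always a removable corner, whose deletion yields the shape with data $(u,d-1,m-1)$ --- again an overlap shape when $m\geq2$, and a horizontal strip of critical hook $0$ when $m=1$ (one checks that the critical box keeps leg $0$). Feeding these into Proposition~\ref{prop:leafrec} and carrying out the local hook computation around the deleted boxes and the critical box --- just as in the proof of Proposition~\ref{prop:leaf1}, with the relevant products telescoping --- I expect to obtain, writing $L_{(u,d,m)}$ for the leaf of an overlap shape with the indicated data,
\[
(u+d)(u-m+r)\,L_{(u,d,m)}=(u-m)(u+d-m+r)\,L_{(u-1,d,m)}+d(u-m+2r)\,L_{(u,d-1,m-1)}.
\]
The factor $(u-m)$ makes the first term vanish exactly when $u=m$, matching the absence of a peelable corner on $S_1$ in that case. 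At $m=1$ the second term is a gap leaf; and the formula of the theorem at $m=0$ coincides term by term with $\Q^u_d(0)$, which by Theorem~\ref{thm:q} is the leaf of a horizontal strip of critical hook $0$, so the stratum $m=0$ is already established.

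Next I would verify that the claimed formula satisfies this recurrence. Writing $d'=d-m$, the formula has the shape $\sum_{k=0}^{d'}f(u,k)\,q(m,k)$ with $f(u,k)=\binom{d'}{k}\prod_{j=k+1}^{d'}(d'+r-j)(u+r+j)$ and $q(m,k)=\prod_{i=0}^{k-1}(m+i+1-r)(i+r)$; since $f$ depends only on $u$ (and the frozen $d'$) while $q$ depends only on $m$, here $q$ plays the role that $\theta$ played for the family $\M^u_d$ of Definition~\ref{defn:M}. The telescoping ratios $f(u,k)/f(u-1,k)=(u+r+k)/(u+r+d')$ and $q(m,k)/q(m-1,k)=(m+k-r)/(m-r)$ reduce the required identity to a polynomial identity in $k$ that can be checked coefficient by coefficient, by the same bookkeeping as in the proofs of Lemma~\ref{lem:M} and Proposition~\ref{prop:M}: the multiplier one peels off, of the form $(A+j)(B+j)$ with $A+B=m+1$, still differs from $(A+k)(B+k)$ by the two-factor expression $(j-k)(A+B+j+k)$, which is precisely what those proofs use. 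A double induction --- on $m$, and for each $m$ on $u$ starting from $u=m$ --- then finishes the argument, with Theorem~\ref{thm:q} supplying $m=0$.

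Lastly, the symmetry between $u$ and $d$ is obtained as in Theorem~\ref{thm:symQ}: one builds the intermediate partitions by instead \emph{adding} to $\mu$ the leftmost box of $S_1$ or the rightmost box of $S_2$, uses the dual recursion of Remark~\ref{rem:leafrec} to get a second recurrence, and runs an induction on $u+d$ against the analogues of the identities for $\phi_d$, now with the normalizing product $\prod_{i=1}^{d-m}(m+i)(i-1+2r)$ of Theorem~\ref{thm:udsym} at $y=0$. I expect the main obstacle to be pinning down the recurrence precisely --- the displayed coefficients and, above all, the boundary behaviour (the vanishing at $u=m$ and the degeneration at $m=1$ into the gap case) --- since the parameter bookkeeping is genuinely more delicate here than in Section~\ref{sec:horz}, where only one of $u,d$ ever moved at a time; once the recurrence is correct, verifying that the formula satisfies it is essentially the algebra of Lemma~\ref{lem:M} again.
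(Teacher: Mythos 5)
Your proposal is correct and follows essentially the same route as the paper: you derive exactly the recurrence of Proposition \ref{prop:leaf3} (with the same coefficients $\beta_0,\beta_1,\beta_2$ and the same boundary behaviour at $u=m$ and $m=1$), verify that the explicit formula satisfies it by expanding in the basis $p_k(m)=\prod_{i=0}^{k-1}(m+i+1-r)$ separating the $u$- and $m$-dependence (the paper's Propositions \ref{prop:T1}--\ref{prop:T3} and Lemma \ref{lem:xshift}), and anchor the induction at $m=0$ via Theorem \ref{thm:q}. The only blemish is a typo-level slip: your telescoping ratio $f(u,k)/f(u-1,k)$ should be $(u+r+d')/(u+r+k)$, i.e., the reciprocal of what you wrote (compare identity (\ref{eq:g_ushift})).
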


When the rows $S_1$ and $S_2$ of $S$ overlap in $m$ columns, the set $J$ consists of $d-m$ boxes $x_1,\ldots,x_{d-m}$ (from left to right) in a single row. Recall that in this case the critical hook $y$ equals $0$, so $L^\la_\mu$ depends only $u$, $d$ and $m$, and since $m$ denotes the number of overlapping columns, we have $m\leq \min\{u,d\}$. It will be convenient to denote $L^\la_\mu$ by $\P^u_d(m)$ in this case. We can then characterize the leaf $\P^u_d(m)$ using the following recurrence.

\begin{prop} The leaf $\P^u_d(m)$ satisfies the recurrence relation
\[
\beta_0 \cdot \P^u_d(m) = \beta_1 \cdot \P^{u-1}_{d}(m) + \beta_2 \cdot \P^{u}_{d-1}(m-1),
\]
where
\begin{align*}
\beta_0(m,u,d,r) &= (u+d)(u-m+r), \\
\beta_1(m,u,d,r) &= (u-m)(u+d-m+r), \text{ and}\\
\beta_2(m,u,d,r) &= d(u-m+2r), 
\end{align*}
with the initial condition $\P^u_d(0)=\Q^u_d(0)$, where $\Q^u_d$ denotes the polynomials from Theorem \ref{thm:q}.
\label{prop:leaf3}
\end{prop}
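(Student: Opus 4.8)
The plan is to imitate the proof of Proposition~\ref{prop:leaf1}: produce the recurrence from the geometric leaf recursion of Proposition~\ref{prop:leafrec} applied to two one-box deletions from $\la$, and settle the base case $m=0$ by recognizing it as a horizontal strip of critical hook $0$. So fix an overlap shape, with the two skew rows $S_1,S_2$ in consecutive rows $i,i+1$ (they must be consecutive when $m\geq 1$). Reading off the diagram, $u=\la_i-\mu_i$, $d=\la_{i+1}-\mu_{i+1}$, $m=\la_{i+1}-\mu_i$, the set $J$ consists of the $d-m$ boxes of row $i$ in columns $\mu_{i+1}+1,\dots,\mu_i$, and the critical box is $x^{\ast}=(i,\mu_i)$. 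Write $w=(i,\la_i)$ for the rightmost box of $S_1$ and $z=(i+1,\la_{i+1})$ for the rightmost box of $S_2$. Then $z$ is always a removable corner of $\la$, while $w$ is one precisely when $\la_i>\la_{i+1}$, i.e.\ when $u>m$; in the remaining case $u=m$ the coefficient $\beta_1=(u-m)(u+d-m+r)$ vanishes, so that term is correctly absent. Put $\ka=\la\setminus\{w\}$ and $\nu=\la\setminus\{z\}$. One checks that $\ka/\mu$ is again an overlap shape with parameters $(u-1,d,m)$ and $\nu/\mu$ has parameters $(u,d-1,m-1)$ (degenerating to a horizontal strip when $m=1$), so $L_{\ka/\mu}=\P^{u-1}_d(m)$ and $L_{\nu/\mu}=\P^u_{d-1}(m-1)$. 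Substituting into Proposition~\ref{prop:leafrec} reduces everything to computing the two products $\prod_{s\in S_+}h^\la_\ka(s)\ell^\ka_\mu(s)$ and $\prod_{s\in S_+}h^\la_\nu(s)\ell^\nu_\mu(s)$.

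These two evaluations are the technical heart. For $\nu$: the unique box of $S_+$ in the column of $z$ is $(i,\la_{i+1})$, directly above $z$, and — because the two rows overlap in that column — this box lies in $S_1$ rather than in $J$; deleting $z$ lowers its leg by one, so it contributes $c_\la/c_\nu=(u-m+2r)/(u-m+r)$. The boxes of $S_+$ in the row of $z$ are exactly $S_2\setminus\{z\}$, each with leg $0$, and their $h$-factors telescope to $d$. Since $C^\nu_\mu$ and $R^\nu_\mu$ meet $S_+$ trivially, $\ell^\nu_\mu$ contributes $1$, so $\prod_{s\in S_+}h^\la_\nu(s)\ell^\nu_\mu(s)=d(u-m+2r)/(u-m+r)$. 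For $\ka$: no box of $S_+$ sits in the column of $w$, while the boxes of $S_+$ in its row are $(S_1\cup J)\setminus\{w\}$. Here is the one genuinely new feature compared with the gap case: among these, the boxes lying above a box of $S_2$ — namely the $J$-boxes together with the first $m$ boxes of $S_1$, $d$ boxes in all — have leg $1$, whereas the remaining $u-m-1$ boxes of $S_1$ have leg $0$. Hence the $h$-product splits into two telescoping products, over arm-values $u-m,\dots,u+d-m-1$ and $1,\dots,u-m-1$ respectively, giving $\tfrac{u+d-m+r}{u-m+r}(u-m)$; and $\ell^\ka_\mu$ again meets $S_+$ trivially. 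Feeding the two products into $nL_{\la/\mu}=L_{\ka/\mu}(\cdots)+L_{\nu/\mu}(\cdots)$ with $n=u+d$ and clearing the common denominator $u-m+r$ produces exactly $\beta_0\P^u_d(m)=\beta_1\P^{u-1}_d(m)+\beta_2\P^u_{d-1}(m-1)$.

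For the initial condition: when $m=0$ the skew diagram is a horizontal strip, the critical box is still $x^{\ast}=(i,\mu_i)$, the last box of row $i$ of $\mu$ with no box of $\mu$ beneath it, so $\text{arm}_\mu(x^{\ast})=\text{leg}_\mu(x^{\ast})=0$, whence $y=0$ and Theorem~\ref{thm:q} gives $\P^u_d(0)=\Q^u_d(0)$. The main obstacle in the argument is the leg bookkeeping in the $\ka$-product — one must split the top-row boxes of $S_+$ according to whether they lie above a box of $S_2$, and it is exactly this split (vacuous in the gap case) that generates the factor $(u-m)(u+d-m+r)$ and thereby injects the dependence on $m$ into the recurrence. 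One must also track carefully, in each of the decompositions of $\la$ by $\ka$, of $\la$ by $\nu$, of $\ka$ by $\mu$, and of $\nu$ by $\mu$, which of the five box-types of Definition~\ref{inddecomp} each element of $S_+$ occupies; the mildly surprising payoff is that both $\ell$-products collapse to $1$, so all the content lives in the $h$-products.
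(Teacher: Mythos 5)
Your proof is correct and takes exactly the route the paper intends: the paper's own proof of Proposition \ref{prop:leaf3} consists of the single sentence that the argument is ``entirely similar'' to that of Proposition \ref{prop:leaf1}, and you have carried out precisely that argument in detail, applying Proposition \ref{prop:leafrec} to the two corner deletions $\ka$ and $\nu$ and evaluating the $h$- and $\ell$-products. Your bookkeeping --- the leg-$1$/leg-$0$ split of the top-row boxes according to whether they sit above $S_2$, the collapse of both $\ell$-products to $1$ (unlike the gap case), the vanishing of $\beta_1$ when $u=m$, and the identification of the $m=0$ base case with $\Q^u_d(0)$ --- all checks out and reproduces the stated coefficients $\beta_\bullet$.
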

\begin{proof} The argument is entirely similar to the one employed in the proof of Proposition \ref{prop:leaf1}. 
\end{proof}

\subsection{Proof of Theorem \ref{thm:p}}

We provisionally define
\begin{align}
\label{eq:Nshift}
\N^u_d(m) = \sum^{d}_{k=0} \binom{d}{k}\prod_{i=0}^{k-1} (m+i+1-r)(i+r)\prod_{j=k+1}^{d}(d-j+r)(u+j+r),
\end{align}
and wish to establish that (up to a simple modification $d \mapsto d-m$) these polynomials satisfy the same recurrence as the one described in Proposition \ref{prop:leaf3} for $\P^u_d$. For brevity, we will henceforth write
\begin{align}
\label{eq:Nbasis}
\N^u_{d}(m) = \sum_{k=0}^{d}g_k(u,d)p_k(m),
\end{align}
where 
\begin{align}
p_k(m) &= \prod_{i=0}^{k-1}(m+i+1-r), \text{ and} \label{eq:p}\\
g_k(u,d) &= \binom{d}{k}\prod_{i=0}^{k-1} (r+i)\prod_{j=k+1}^{d}(d-j+r)(u+j+r).  \label{eq:f}
\end{align}

We proceed by recalling that the recurrence from Proposition \ref{prop:leaf3} expresses $\P^u_d(m)$ in terms of $\P^{u-1}_d(m)$ and $\P^u_{d-1}(m-1)$. The upcoming Propositions \ref{prop:T1}, \ref{prop:T2} and \ref{prop:T3} consider corresponding expressions involving $\N^u_d(m)$, and show that the first equals the sum of the next two when expanded in terms of the basis $p_k(m)$ of polynomials in $m$ over the field $\mathbb{Q}(r,u,d)$. In each argument it becomes necessary to manipulate the $p_k(m)$ to pass between adjacent indices, and the following lemma allows us to perform such manipulations.

\begin{lem}
\label{lem:xshift}
For any polynomial $h = h(y,u,d,r)$, we have 
\begin{align}
(h + m) p_{k}(m) &=  \big[(h-k-1+r) p_{k}(m) + p_{k+1}(m) \big], \text{ and} \label{eq:h+}\\
(h - m)  p_{k}(m) &=  \big[(h+k+1-r) p_{k}(m) - p_{k+1}(m)\big]. \label{eq:h-}
\end{align}
\end{lem}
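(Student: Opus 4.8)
The statement to prove is Lemma \ref{lem:xshift}: for any polynomial $h = h(y,u,d,r)$,
\begin{align*}
(h + m)\, p_{k}(m) &= (h-k-1+r)\, p_{k}(m) + p_{k+1}(m), \text{ and}\\
(h - m)\, p_{k}(m) &= (h+k+1-r)\, p_{k}(m) - p_{k+1}(m).
\end{align*}

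The plan is to prove this by unwinding the definition of $p_k(m)$ from \eqref{eq:p}, namely $p_k(m) = \prod_{i=0}^{k-1}(m+i+1-r)$, and observing that the whole identity hinges on a single telescoping relation between $p_{k+1}$ and $p_k$. First I would record that $p_0(m) = 1$ (empty product) and, directly from the definition, $p_{k+1}(m) = p_k(m)\cdot(m+k+1-r)$, since the extra factor in the product defining $p_{k+1}$ over the one defining $p_k$ is the $i = k$ term, which is $(m + k + 1 - r)$. Equivalently, $p_{k+1}(m) = (m - k - 1 + r + 2k + 2 - 2r)\,p_k(m)$ — but cleaner is just to keep $p_{k+1}(m) = (m+k+1-r)p_k(m)$.

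Next I would prove the first identity \eqref{eq:h+} by a one-line rearrangement: starting from the right-hand side,
\[
(h - k - 1 + r)\,p_k(m) + p_{k+1}(m) = (h - k - 1 + r)\,p_k(m) + (m + k + 1 - r)\,p_k(m) = (h + m)\,p_k(m),
\]
since $(-k-1+r) + (m+k+1-r) = m$. This is exactly the left-hand side, proving \eqref{eq:h+}. For the second identity \eqref{eq:h-}, I would argue symmetrically: the right-hand side is
\[
(h + k + 1 - r)\,p_k(m) - p_{k+1}(m) = (h + k + 1 - r)\,p_k(m) - (m + k + 1 - r)\,p_k(m) = (h - m)\,p_k(m),
\]
using $(k+1-r) - (m+k+1-r) = -m$. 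Both identities then follow. One should note that these are identities of polynomials in $m$ (with coefficients in $\mathbb{Q}(h, r, u, d)$ or really any ring containing the relevant quantities), so no side conditions on $h$, $u$, $d$, $r$ are needed — the computation is purely formal.

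There is essentially no obstacle here: the lemma is a routine algebraic identity whose only content is the recursion $p_{k+1}(m) = (m+k+1-r)\,p_k(m)$ built into the definition \eqref{eq:p}. The only thing to be careful about is bookkeeping with the shift $i+1-r$ inside the product and making sure the index $i = k$ term is correctly identified as the new factor when passing from $p_k$ to $p_{k+1}$; beyond that, each identity is a single substitution and collection of like terms. I would present the proof in three short sentences: state the recursion for $p_k$, substitute into the right-hand side of \eqref{eq:h+}, and substitute into the right-hand side of \eqref{eq:h-}.
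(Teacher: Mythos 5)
Your proof is correct and follows essentially the same route as the paper: both rest on the single identity $p_{k+1}(m) = (m+k+1-r)\,p_k(m)$ from the definition of $p_k$, followed by splitting $h\pm m$ into the appropriate pieces. Nothing is missing.
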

\begin{proof}
Both statements are derivable from the identity
\begin{align*}
p_{k+1}(m) &= (m-r+k+1) \cdot p_{k}(m),
\end{align*} 
which follows directly from (\ref{eq:p}). For instance, (\ref{eq:h+}) is verified by noting \[h + m = (h-k-1+r) + (m+ k+1-r).\] 
\end{proof}

\begin{prop}
\label{prop:T1}
Given $\N^u_d(m)$ as in (\ref{eq:Nbasis}), we have
\[
(u+d+m)(u-m+r)\N^u_d(m) = \sum_{k=0}^d \sum_{i=0}^2 g_k(u,d)  \gamma^{1,i}_{k}(u,d)p_{k+i}(m),
\]
where the three functions $\gamma^{1,\bullet}_k$ are given by
\begin{align*}
\gamma^{1,0}_k(u,d) &= (u+d-k-1+r)(u+k+1) \\
\gamma^{1,1}_k(u,d) &= 2k+3-d-r, \text{ and} \\
\gamma^{1,2}_k(u,d) &= -1.
\end{align*}
\end{prop}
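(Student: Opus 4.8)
The plan is to prove Proposition \ref{prop:T1} by a direct computation: expand the left-hand side in the $p_k(m)$ basis using Lemma \ref{lem:xshift}, and match coefficients against the claimed right-hand side. First I would write $(u+d+m)(u-m+r) = (u+d)(u-m+r) + m(u-m+r)$, or more cleanly group the quadratic in $m$ as a combination of the ``$h+m$'' and ``$h-m$'' shift operators from the lemma; the key observation is that $(u+d+m)(u-m+r) = (u+d)(u+r) + m(u-m+r) - (u+d)m$ contains both an $m$ and an $m^2$ term, so two applications of Lemma \ref{lem:xshift} will be needed, producing shifts $p_k \to p_{k+1} \to p_{k+2}$, which is exactly why the answer involves $\gamma^{1,0}_k, \gamma^{1,1}_k, \gamma^{1,2}_k$ attached to $p_k, p_{k+1}, p_{k+2}$ respectively.

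More concretely, I would first handle the $m^2$-term: writing $-m^2 \cdot p_k(m)$, apply \eqref{eq:h-} once (with $h$ chosen so that the surviving $\pm m$ piece is absorbed) to reduce $m \cdot p_k$ to a combination of $p_k$ and $p_{k+1}$, then apply the shift identity $p_{k+1}(m) = (m-r+k+1)p_k(m)$ a second time to the residual $m \cdot p_{k+1}$. Collecting, one gets an expression of the form $\sum_k g_k(u,d)\big(A_k p_k(m) + B_k p_{k+1}(m) + C_k p_{k+2}(m)\big)$ with $C_k = -1$ (coming from the leading $-m^2$), and $A_k, B_k$ explicit polynomials in $u,d,k,r$. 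The final step is purely algebraic: verify that after reindexing the $p_{k+1}$ and $p_{k+2}$ sums so that everything is written as $\sum_k (\cdots) p_k(m)$, the coefficient of each $p_k$ matches $g_k \gamma^{1,0}_k + g_{k-1}\gamma^{1,1}_{k-1} + g_{k-2}\gamma^{1,2}_{k-2}$; this requires knowing the ratios $g_{k-1}/g_k$ and $g_{k-2}/g_k$, which follow from the explicit product formula \eqref{eq:f} for $g_k$ (namely $g_{k-1}/g_k = \frac{k}{d-k+1}\cdot\frac{(d-k+r)(u+k+r)}{r+k-1}$ and similarly for $g_{k-2}/g_k$).

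I expect the main obstacle to be purely bookkeeping: correctly tracking the reindexing of the three shifted sums and the $g_k/g_{k-1}$ ratios so that the three contributions to the coefficient of $p_k(m)$ telescope into the stated $\gamma^{1,i}_k$. There is a minor subtlety at the boundary indices $k=0$ and $k=d$ (where $g_{-1}, g_{-2}$ or $p_{d+1}, p_{d+2}$ appear), but these vanish automatically: $g_k$ has a factor $\binom{d}{k}$ which kills negative indices, and the top shifts simply extend the range of $k$ in a way consistent with $\gamma^{1,2}_k = -1$ being a constant. So the proof is a finite verification with no conceptual difficulty beyond careful algebra; I would present it by first stating the two-step reduction via Lemma \ref{lem:xshift}, then tabulating $A_k, B_k, C_k$, and finally checking the coefficient identity using the explicit ratios of the $g_k$'s.
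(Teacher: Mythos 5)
Your plan is correct and is essentially the paper's own proof: both consist of two applications of Lemma \ref{lem:xshift} to push the quadratic prefactor $(u+d+m)(u-m+r)$ through the $p_k(m)$'s, yielding for each $k$ a combination $A_k p_k(m)+B_k p_{k+1}(m)+C_k p_{k+2}(m)$ attached to $g_k(u,d)$ whose coefficients are exactly $\gamma^{1,0}_k,\gamma^{1,1}_k,\gamma^{1,2}_k$ (the paper absorbs the linear factors $(u-m+r)$ and $(u+d+m)$ one at a time via \eqref{eq:h-} with $h=u+r$ and \eqref{eq:h+} with $h=u+d$, rather than expanding in powers of $m$, but this is cosmetic). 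Your final reindexing step is superfluous: the claimed right-hand side is already written in the shifted form $\sum_k g_k\bigl(\gamma^{1,0}_k p_k+\gamma^{1,1}_k p_{k+1}+\gamma^{1,2}_k p_{k+2}\bigr)$, so no passage to the $p_k$-basis and no $g_{k-1}/g_k$ ratios are needed --- one simply checks $A_k=\gamma^{1,0}_k$, $B_k=\gamma^{1,1}_k$, $C_k=-1$ termwise, which your computation does produce.
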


\begin{proof}
Let $\LS$ denote the left side of the desired equality. Applying (\ref{eq:h-}) with $h = u+r$ gives
\[
\LS = (u+d+m)\left[\sum_{k=0}^d g_k(u,d) (u+k+1)p_k(m) - \sum_{k=0}^d g_k(u,d) p_{k+1}(m)\right].
\]
By (\ref{eq:h+}), with $h = u+d$, the first sum becomes
\begin{align*}
\LS_1 &= (u+d+m)\sum_{k=0}^d g_k(u,d) (u+k+1)p_k(m) \\
&= \sum_{k=0}^d g_k(u,d) \big[(u+d-k-1+r)(u+k+1)p_k(m) + (u+k+1)p_{k+1}(m)\big].
\end{align*}
Similarly, the second sum becomes
\begin{align*}
\LS_2 &= -(u+d+m)\sum_{k=0}^d g_k(u,d) p_{k+1}(m)\\
&= - \sum_{k=0}^d g_k(u,d) \big[ (u+d-k-2-r)p_{k+1}(m) + p_{k+2}(m) \big]
\end{align*}
Observing that $\LS = \LS_1 + \LS_2$ and collecting the coefficients of the $p_k(m)$ completes the argument.
\end{proof}

The proof of our second proposition uses the following elementary consequence of (\ref{eq:f}):
\begin{align}
\label{eq:g_ushift}
(u+d+r)\cdot g_k(u-1,d) = (u+k+r)\cdot g_k(u,d).
\end{align}

\begin{prop}
\label{prop:T2}
Given $\N^u_d(m)$ as in (\ref{eq:Nbasis}), we have
\[
(u-m)(u+d+r)\N^{u-1}_d(m) = \sum_{k=0}^d \sum_{i=0}^2 g_k(u,d)  \gamma^{2,i}_{k}(u,d)p_{k+i}(m),
\]
where the three functions $\gamma^{2,\bullet}_k$ are
\begin{align*}
\gamma^{2,0}_k(u,d) &= (u+k+1-r)(u+k+r) \\
\gamma^{2,1}_k(u,d) &= -(u+k+r), \text{ and} \\
\gamma^{2,2}_k(u,d) &= 0.
\end{align*}
\end{prop}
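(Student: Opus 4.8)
The plan is to follow the same pattern as the proof of Proposition \ref{prop:T1}, though the argument here is much shorter since it needs only a single application of Lemma \ref{lem:xshift}. I would write $\LS$ for the left side of the claimed identity and start from the expansion $\N^{u-1}_d(m) = \sum_{k=0}^d g_k(u-1,d)\, p_k(m)$ supplied by (\ref{eq:Nbasis}). The first step is to trade $g_k(u-1,d)$ for $g_k(u,d)$: multiplying by $(u+d+r)$ and invoking the $g$-shift identity (\ref{eq:g_ushift}) gives
\[
(u+d+r)\,\N^{u-1}_d(m) = \sum_{k=0}^d (u+k+r)\, g_k(u,d)\, p_k(m).
\]

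Next I would multiply through by the remaining factor $(u-m)$ and apply (\ref{eq:h-}) of Lemma \ref{lem:xshift} with $h = u$, namely $(u-m)\,p_k(m) = (u+k+1-r)\,p_k(m) - p_{k+1}(m)$. Distributing over this two-term expansion yields
\[
\LS = \sum_{k=0}^d (u+k+r)(u+k+1-r)\, g_k(u,d)\, p_k(m) - \sum_{k=0}^d (u+k+r)\, g_k(u,d)\, p_{k+1}(m).
\]
Both sums on the right are already grouped by their source index $k$, so one simply reads off the coefficients: the $i=0$ term at index $k$ is $(u+k+1-r)(u+k+r)\,g_k(u,d)$, the $i=1$ term at index $k$ is $-(u+k+r)\,g_k(u,d)$, and there is no $i=2$ term --- which is exactly the assertion that $\gamma^{2,0}_k = (u+k+1-r)(u+k+r)$, $\gamma^{2,1}_k = -(u+k+r)$, and $\gamma^{2,2}_k = 0$.

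I do not expect a genuine obstacle here; the whole proof is a direct verification. The only points that need care are the precise bookkeeping in (\ref{eq:h-}) --- it is $(h+k+1-r)$, not $(h-k-1+r)$, that appears after the shift --- and the observation that the $k=d$ term of the second sum contributes a $p_{d+1}(m)$ lying outside the natural range $0 \le k \le d$. As in the proof of Proposition \ref{prop:T1}, such out-of-range boundary terms are retained deliberately, since they will cancel once Propositions \ref{prop:T1}, \ref{prop:T2}, and the forthcoming Proposition \ref{prop:T3} are combined to verify the recurrence of Proposition \ref{prop:leaf3}.
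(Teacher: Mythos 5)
Your proof is correct and follows exactly the paper's argument: absorb the factor $(u+d+r)$ via the $g$-shift identity (\ref{eq:g_ushift}), then absorb $(u-m)$ via (\ref{eq:h-}) of Lemma \ref{lem:xshift} with $h=u$, and read off the coefficients. The paper's own proof is just a terser version of the same computation, so there is nothing to add.
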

\begin{proof}
The left side $\LS$ is given by
\[
\LS = (u-m)(u+d+r) \sum_{k=0}^d g_k(u-1,d) p_k(m),
\]
and we use (\ref{eq:g_ushift}) to incorporate the $(u+d+r)$ factor within the sum. This gives
\[
\LS = (u-m)\sum_{k=0}^d (u+k+r) g_k(u,d) p_k(m),
\]
and it only remains to incorporate the $(u-m)$ factor. For this purpose, we once again apply (\ref{eq:h-}), this time with $h = u$, and obtain the desired result.
\end{proof}

Our third proposition requires a new identity which is easily verified via (\ref{eq:p}).
\begin{align}
p_k(m-1) &= p_k(m) - k \cdot p_{k-1}(m) \label{eq:p_mshift}
\end{align}

\begin{prop}
\label{prop:T3}
Given $\N^u_d(m)$ as in (\ref{eq:Nbasis}), we have
\[
(d+m)(u-m+2r)\N^{u}_{d}(m-1) = \sum_{k=0}^d \sum_{i=0}^2 g_k(u,d)  \gamma^{3,i}_{k}(u,d)p_{k+i}(m),
\]
where the three functions $\gamma^{3,\bullet}_k$ are given by
\begin{align*}
\gamma^{3,0}_k(u,d) &= (d-k+r)(u+k+r)-(k+1)(u-d+2k+1)-(d-k)(k+r), \\
\gamma^{3,1}_k(u,d) &= (u-d+3k+3), \text{ and} \\
\gamma^{3,2}_k(u,d) &= -1.
\end{align*}
\end{prop}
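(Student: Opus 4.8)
The plan is to follow the template already set by Propositions~\ref{prop:T1} and~\ref{prop:T2}: expand the left-hand side in the basis $\{p_k(m)\}$ of polynomials in $m$ over $\mathbb{Q}(r,u,d)$, push every $m$-dependent factor inside the sum $\sum_k g_k(u,d)(\cdots)$ using the shift relations of Lemma~\ref{lem:xshift}, and then read off the coefficient of each $p_k(m)$. The one feature distinguishing this case from the previous two is that the left-hand side contains $\N^u_d(m-1)$ rather than $\N^u_d(m)$, so the very first move is to invoke the new identity (\ref{eq:p_mshift}) and write
\[
\N^u_d(m-1)=\sum_{k=0}^d g_k(u,d)\bigl(p_k(m)-k\,p_{k-1}(m)\bigr).
\]

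Next I would absorb the two scalar prefactors $(u-m+2r)$ and $(d+m)$ into the sum, one at a time (the order is immaterial). For $(u-m+2r)$ I would use (\ref{eq:h-}) with $h=u+2r$, applied to each of $(u-m+2r)p_k(m)$ and $(u-m+2r)p_{k-1}(m)$; for $(d+m)$ I would use (\ref{eq:h+}) with $h=d$. After expanding, the left-hand side becomes $\sum_{k=0}^{d} g_k(u,d)$ times an explicit combination of $p_{k-1}(m),p_k(m),p_{k+1}(m),p_{k+2}(m)$, with coefficients that are elementary polynomials in $u,d,k,r$. A short bookkeeping computation shows that the coefficients of $p_{k+1}(m)$ and $p_{k+2}(m)$ immediately collapse to $u-d+3k+3$ and $-1$, i.e.\ to $\gamma^{3,1}_k$ and $\gamma^{3,2}_k$, and the coefficient of $p_{k-1}(m)$ comes out to be $-k(u+k+r)(d-k+r)\,g_k(u,d)$.

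The step I expect to require the only genuine (non-formal) work is disposing of this leftover $p_{k-1}(m)$ term, since the target expression contains, for each $k$, only $p_k(m),p_{k+1}(m),p_{k+2}(m)$ with coefficient $g_k(u,d)$. To remove it I would derive from (\ref{eq:f}) the ``$k$-shift'' identity
\[
(d-k)(r+k)\,g_k(u,d)=(k+1)(d-k-1+r)(u+k+1+r)\,g_{k+1}(u,d),
\]
and use it, after re-indexing $k\mapsto k+1$ in the stray sum (the $k=0$ term vanishes), to rewrite $-k(u+k+r)(d-k+r)\,g_k(u,d)\,p_{k-1}(m)$ as the contribution $-(d-k)(r+k)\,g_k(u,d)\,p_k(m)$ to the $p_k$-slot; the boundary index $k=d$ produces the zero coefficient $-(d-d)(r+d)$ and so may be freely included.

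Adding this to the $p_k(m)$-coefficient already obtained then reduces the proof to a single polynomial identity in $u,d,k,r$, namely (after canceling the common term $(d-k)(k+r)$)
\[
(u+2k+1+r)(d-k-1+r)-k(u+k+r)=(d-k+r)(u+k+r)-(k+1)(u-d+2k+1),
\]
which is a routine expansion: both sides equal the same quadratic in $r$. This yields $\gamma^{3,0}_k=(d-k+r)(u+k+r)-(k+1)(u-d+2k+1)-(d-k)(k+r)$, completing the argument. The only care needed is the tracking of endpoint indices $k=0$ and $k=d$, which causes no difficulty since the relevant coefficients vanish there.
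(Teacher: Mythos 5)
Your proof is correct and follows essentially the same route as the paper's: both decompose $\N^u_d(m-1)$ via (\ref{eq:p_mshift}), absorb the factors $(u-m+2r)$ and $(d+m)$ into the sum with Lemma \ref{lem:xshift}, and realign the stray $p_{k-1}(m)$ terms using the ratio of $g_k$ to $g_{k+1}$ derived from (\ref{eq:f}). Your closing polynomial identity and the boundary bookkeeping at $k=0$ and $k=d$ all check out.
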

\begin{proof}
Using (\ref{eq:p_mshift}), we can decompose the left side $\LS$ of the desired equality into two pieces $\LS_1 - \LS_2$, where
\begin{align*}
\LS_1 &= (d+m)(u-m+2r)\sum_{k=0}^{d}g_k(u,d)p_k(m), \text{ and} \\
\LS_2 &= (d+m)(u-m+2r)\sum_{k=0}^{d}g_k(u,d)k p_{k-1}(m).
\end{align*}
Applying (\ref{eq:h-}) to $\LS_1$ with $h = u+2r$ yields
\[
\LS_1 = (d+m)\left[\sum_{k=0}^{d}g_k(u,d)(u+k+1+r)p_k(m)- \sum_{k=0}^d g_k(u,d)p_{k+1}(m)\right].
\] 
Then by applying (\ref{eq:h+}), this time with $h = d$, and collecting $p_\bullet(m)$ terms yields the following equivalent expression for $\LS_1$:
\begin{align}
\label{eq:LS1}
\sum_{k=0}^{d}g_k(u,d)\big[(d+r-k-1)(u+k+r)p_k(m) + (u-d+2k+2)p_{k+1}(m) - p_{k+2}(m)\big].
\end{align}
Similarly, $\LS_2$ simplifies to
\[
\sum_{k=0}^{d}g_k(u,d)k\big[(d-k+r)(u+k+r)p_{k-1}(m) + (u-d+2k+1)p_{k}(m) - p_{k+1}(m)\big].
\]
In order to realign the indices of the $p_{k-1}(m)$ terms, we use the identity
\[
\frac{g_{k-1}(u,d)}{g_k(u,d)} = \frac{k(d-k+r)(u+k-r)}{(d-k+1)(k-1+r)},
\]
which follows immediately from (\ref{eq:f}). Now, $\LS_2$ equals
\[
\sum_{k=0}^{d}g_k(u,d)\Big[\big[(d-k)(k+r)+ (k+1)(u-d+2k-1)\big]p_{k}(m) - (k+1)p_{k+1}(m)\Big].
\]
Subtracting this $\LS_2$ expression from the $\LS_1$ expression (\ref{eq:LS1}) completes the proof.
\end{proof}

The polynomials $\gamma^{1,i}_k, \gamma^{2,i}_k$ and $\gamma^{3,i}_k$ from Propositions \ref{prop:T1}, \ref{prop:T2} and \ref{prop:T3} (for $i \in \{0,1,2\}$) clearly satisfy $\gamma^{1,i}_k = \gamma^{2,i}_k + \gamma^{3,i}_k$ for all $k$. Relabeling the variable $d$ to $d-m$, it follows that the polynomials $\N^u_{d-m}$ satisfy the recurrence from Proposition \ref{prop:leaf3}; this observation concludes our proof of Theorem \ref{thm:p}.

\subsection{Symmetry between $u$ and $d$}

The polynomials $\P^u_d$ of Theorem \ref{thm:p} given by
\[
\P^u_d(m) = \sum^{d-m}_{k=0} \binom{d-m}{k}\prod_{i=0}^{k-1} (m+i+1-r)(i+r)\prod_{j=k+1}^{d-m}(d-m+r-j)(u+r+j)
\]
also enjoy a symmetry analogous to the one described in Theorem \ref{thm:symQ} for the polynomials $\Q^u_d$. In order to describe it, we define the family of functions
\[
\psi_k(x)= \prod_{i=1}^k (x+i)(i-1+2r).
\]

\begin{thm} 
The polynomials $\P^u_d$ and $\psi_k$ satisfy $\P^u_{d}(m)\psi_{u-m}(m)=\P^d_{u}(m)\psi_{d-m}(m)$.
\label{thm:symP}
\end{thm}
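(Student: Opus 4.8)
The plan is to mirror the proof of Theorem \ref{thm:symQ} almost verbatim, exploiting the fact that the overlap case is governed by a recurrence (Proposition \ref{prop:leaf3}) of the same shape as the one for $\Q^u_d$, together with the initial condition $\P^u_d(0) = \Q^u_d(0)$. First I would derive a \emph{dual} recurrence for $\P^u_d(m)$, obtained — exactly as in the proof of Theorem \ref{thm:symQ} — by building the intermediate partitions $\kappa$ and $\nu$ via \emph{adding} (rather than removing) the leftmost box of $S_1$ and the rightmost box of $S_2$ to $\mu$, and by invoking the dual recurrence of Remark \ref{rem:leafrec} instead of Proposition \ref{prop:leafrec}. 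This should produce an identity of the form $\bar\beta_0 \cdot \P^u_d(m) = \bar\beta_1 \cdot \P^{u-1}_d(m) + \bar\beta_2 \cdot \P^u_{d-1}(m-1)$ for suitable polynomials $\bar\beta_\bullet(m,u,d,r)$, with $\bar\beta_0$ \emph{symmetric} under $u \leftrightarrow d$ (the $u\leftrightarrow d$ symmetric factor $(u+d)$ times something symmetric), which is the key structural feature that made the $\Q$-argument close.

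Next I would record the analogues of the $\phi$-identities \eqref{eq:phid1}--\eqref{eq:phid3} for $\psi_k(m) = \prod_{i=1}^k (m+i)(i-1+2r)$: namely a "strip off the top factor" identity, a "strip off the bottom factor" identity, and a relation comparing $\psi_k$ at shifted arguments. These follow immediately from the product definition of $\psi_k$, just as the $\phi$-identities did. (One should double-check which argument of $\psi$ gets shifted when $m \mapsto m-1$; note $\psi_k(m)$ depends on $m$ only through the first factors $(m+i)$, so the shift $m\mapsto m-1$ behaves cleanly.) I would also verify the base case: setting $u - m = 0$ in the dual recurrence should force $\P^{m}_d(m)$ to satisfy the same recurrence as $\psi_{d-m}(m)$, giving $\P^m_d(m) = \psi_{d-m}(m)$, the analogue of $\Q^0_d(y) = \phi_d(y)$.

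With these ingredients in hand, the proof is an induction on $n = u+d$, identical in structure to that of Theorem \ref{thm:symQ}: write the dual recurrence for $\P^d_u(m)$ with the roles of $u$ and $d$ swapped, multiply through by $\psi_{d-m}(m)$, use the "strip off" $\psi$-identity to rewrite the first term on the right, apply the inductive hypothesis to convert each $\P^{d-1}_u$ and $\P^d_{u-1}$ into a $\P^u_{d-1}$ or $\P^u_{d-1}$ times the appropriate $\psi$, then use the remaining $\psi$-identities to pull out a common factor $\psi_{u-m}(m)$. The bracketed expression that remains should be exactly $\beta_0(m,u,d,r)\cdot\P^u_d(m)$ by Proposition \ref{prop:leaf3}, and since $\bar\beta_0(m,d,u,r) = (u+d)(\cdot) = \beta_0(m,u,d,r)$ by the symmetry of $\bar\beta_0$, the two sides match. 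The main obstacle I anticipate is purely bookkeeping: correctly computing the dual coefficients $\bar\beta_\bullet$ (the $h$- and $\ell$-products from Remark \ref{rem:leafrec}, including the extra $\prod_{s\in J} c_\mu(s)c'_\mu(s)/c_\nu(s)c'_\nu(s)$ factor, which has no analogue in the gap case since there $m=0$ changes the structure of $J$) and then checking that the $\psi$-identities interlock with these coefficients so that the common factor $\psi_{u-m}(m)$ genuinely factors out — this is where the overlap case could diverge from the gap case and require a genuinely different $\psi_k$ than the naive guess.
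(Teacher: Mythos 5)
Your proposal follows the paper's proof essentially verbatim: the paper derives exactly such a dual recurrence via Remark \ref{rem:leafrec} by adding boxes to $\mu$, namely $\beta'_0\cdot\P^u_d(m)=\P^{u-1}_{d}(m-1)+\beta'_2\cdot\P^{u}_{d-1}(m)$ with $\beta'_0=(u+d)(d-m+r)$ and $\beta'_2=d(d-m)(u+d-m+r)(d-m-1+2r)$, records the needed $\psi$-identities, and closes the induction on $n=u+d$ exactly as you describe, ending with an appeal to Proposition \ref{prop:leaf3}. The only details to adjust are that the $m$-shift lands on the other term than you guessed (adding the leftmost box of $S_1$ to $\mu$ removes an overlap column, so it is $\P^{u-1}_d(m-1)$, not $\P^u_{d-1}(m-1)$, that carries the shift), and that $\beta'_0$ is not literally symmetric in $u,d$ --- the relation that actually closes the argument is the one you correctly write at the end, $\beta'_0(m,d,u,r)=(u+d)(u-m+r)=\beta_0(m,u,d,r)$.
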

\begin{proof}
Using an argument similar to the one employed in Proposition \ref{prop:leaf1}, one can show that $\P^u_d$ satisfies the recurrence relation
\[
\beta'_0 \cdot \P^u_d(m) = \P^{u-1}_{d}(m-1) + \beta'_2\cdot\P^{u}_{d-1}(m),
\]
where 
\begin{align*}
\beta'_0(m,u,d,r) &= (u+d)(d-m+r), \text{ and}\\
\beta'_2(m,u,d,r) &= d(d-m)(u+d-m+r)(d-m-1+2r),
\end{align*}
along with the initial condition $\P^u_d(0)=\Q^{u}_d(0)$. To derive this recurrence, we construct the intermediate partitions $\kappa$ and $\nu$ by adding the leftmost box of $S_1$ and the rightmost box of $S_2$ to $\mu$ respectively before invoking the dual recurrence of Remark \ref{rem:leafrec}. We proceed by induction on $n=u+d$, and note that the result trivially holds when $n=0$. For $n>0$, we exchange the roles of $u$ and $d$ in the recurrence above to obtain
\[
\beta'_0(m,d,u,r)\cdot\P^d_u(m) = \P^{d-1}_{u}(m-1) + \beta'_2(m,d,u,r)\cdot\P^{d}_{u-1}(m).
\]
Multiplying throughout by $\psi_{d-m}(m)$ and using the identity
\[
\psi_{d-m}(m) = \frac{d}{m}\psi_{d-m}(m-1)
\]
with the first term on the right side, we note that $\beta'_0(m,d,u,r)\cdot\P^d_u(m)\psi_{d-m}(m)$ is given by
\[
\frac{d}{m}\cdot\P^{d-1}_{u}(m-1)\psi_{d-m}(m-1) + \beta'_2(m,d,u,r)\P^{d}_{u-1}(m)\psi_{d-m}(m). 
\]
By the inductive hypothesis, this expression equals
\[
\frac{d}{m} \P^{u}_{d-1}(m-1)\psi_{u-m+1}(m-1)+ \beta'_2(m,d,u,r)\P^{u-1}_{d}(m)\psi_{u-m-1}(m).
\]
Next, we shift the indices of the $\psi$-factors in each term above by using the identities
\begin{align*}
\psi_{u-m+1}(m-1) &= m(u-m+2r) \cdot \psi_{u-m}(m), \text{ and } \\ 
\psi_{u-m}(m) &= u(u-m-1+2r)\cdot \psi_{u-m-1}(m),
\end{align*}
and recall the definitions of the $\beta'_\bullet$ to obtain that $\beta'_0(m,d,u,r)\cdot \P^d_u(m)\psi_{d-m}(m)$ equals:
\[
\left[d(u-m+2r)\cdot\P^u_{d-1}(m-1)+ (u-m)(u+d-m+r)\cdot\P^{u-1}_d(m)\right]\psi_{d-m}(m).
\]
An appeal to Proposition \ref{prop:leaf3} confirms that the factor within the square brackets above equals $\beta_0(m,u,d,r)\cdot\P^u_d(m)$. Finally, we note that
\[
\beta_0(m,u,d,r) = (u+d)(u-m+r) =\beta'_0(m,d,u,r),
\] and this concludes the proof.
\end{proof}

\begin{cor} ${\P}^m_d(m)=\psi_{d-m}(m)$
\end{cor}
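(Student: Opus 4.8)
The plan is to use the symmetry theorem (Theorem \ref{thm:symP}) with the special value $u = m$ to collapse the polynomial $\P^m_d(m)$ to the explicit product $\psi_{d-m}(m)$. First I would observe that when $u = m$, the quantity $u - m$ equals $0$, so $\psi_{u-m}(m) = \psi_0(m)$, which is an empty product and hence equals $1$. Substituting $u = m$ into the identity $\P^u_d(m)\psi_{u-m}(m) = \P^d_u(m)\psi_{d-m}(m)$ then gives $\P^m_d(m) \cdot 1 = \P^d_m(m)\cdot \psi_{d-m}(m)$, so it remains only to show $\P^d_m(m) = 1$.

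To see $\P^d_m(m) = 1$, I would return to the explicit formula for $\P^u_d$ from Theorem \ref{thm:p}, namely
\[
\P^u_d(m) = \sum^{d-m}_{k=0} \binom{d-m}{k}\prod_{i=0}^{k-1} (m+i+1-r)(i+r)\prod_{j=k+1}^{d-m}(d-m+r-j)(u+r+j),
\]
and set the \emph{lower-row} count to $m$; that is, the roles are such that $\P^d_m(m)$ has ``$d$'' playing the role of the upper row and ``$m$'' playing the role of the lower row, so $d - m$ in the formula becomes $m - m = 0$. The sum then has the single term $k = 0$, both products are empty, and we obtain $\P^d_m(m) = \binom{0}{0} = 1$. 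Alternatively, and perhaps more cleanly, I would invoke the recurrence of Proposition \ref{prop:leaf3}: when the number of overlapping columns equals the lower-row count, the skew diagram $\la/\mu$ with those parameters has empty $J$ (since $J$ has $d - m$ boxes and here the lower count equals $m$), so the stem equals $b^\la_\mu$ exactly and the leaf is $1$ by the same reasoning as in the base case $n = 0$ of Proposition \ref{prop:leaf1}.

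The main obstacle is purely bookkeeping: one must be careful about which of the two subscripts of $\P$ denotes the upper row and which denotes the lower row, and correspondingly which one appears as the argument of $d - m$ inside the formula and inside $\psi_{\bullet}$. Once the substitution $u = m$ is made correctly, using $\psi_0(m) = 1$, everything reduces to the trivial evaluation $\P^d_m(m) = 1$, and no real computation is needed. I would therefore present the argument as: substitute $u = m$ in Theorem \ref{thm:symP}, note $\psi_{u-m}(m) = \psi_0(m) = 1$ on the left, and evaluate the right-hand factor $\P^d_m(m)$ directly from Theorem \ref{thm:p} (or from the leaf's definition) to be $1$, yielding $\P^m_d(m) = \psi_{d-m}(m)$ as claimed.
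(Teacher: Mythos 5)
Your argument is correct and is essentially the paper's own proof: set $u=m$ in Theorem \ref{thm:symP}, use $\psi_{0}(m)=1$, and evaluate $\P^d_m(m)=1$ directly from the explicit formula of Theorem \ref{thm:p} (the paper simply asserts this last evaluation). The only caveat is your parenthetical alternative---that an empty $J$ forces the leaf to equal $1$---which does not follow as stated, since the stem still carries $C$- and $R$-contributions; but this aside is unnecessary because the direct evaluation suffices.
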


\begin{proof} If $u=m$, then Theorem \ref{thm:symP} gives us:
$\P^m_d(m)\psi_{m-m}(m)=\P^d_m(m)\psi_{d-m}(m).$
Since $\psi_{0}(m)=\P^d_m(m)=1$, we have ${\P}^m_d(m)=\psi_{d-m}(m)$.
\end{proof}

%
%
%
%
%
%


\begin{thebibliography}{99}

\bibitem{bingham:74}
C. Bingham. 
\newblock An identity involving partitional generalized binomial coefficients.
\newblock {\em J. Multivariate Analysis} 4 (1974), 210 -- 223.

\bibitem{jack:69}
H. Jack.
\newblock A class of symmetric polynomials with a parameter.
\newblock {\em Proceedings of the Royal Society Edinburgh Section A} 69, (1969--1970) 1 -- 17.

\bibitem{kaneko:93}
J. Kaneko.
\newblock Selberg integrals and hypergeometric functions associated with Jack polynomials.
\newblock {\em SIAM Journal on Mathematical Analysis} 24 (1993) 1086 -- 1110.

\bibitem{knop:97}
F. Knop.
\newblock Symmetric and nonsymmetric quantum Capelli polynomials.
\newblock {\em Commentarii Mathematici Helvetici} 72 (1997) 84--100. 




\bibitem{knop:sahi:96}
F. Knop and S. Sahi. 
\newblock Difference equations and symmetric polynomials defined by their zeros.
\newblock {\em International Mathematics Research Notices} 10 (1996) 473--86.

\bibitem{knop:sahi:97}
F. Knop and S. Sahi. 
\newblock A recursion and a combinatorial formula for Jack polynomials.
\newblock {\em Inventiones Mathematicae} 128 (1997) 9--22.



\bibitem{lascoux:82}
M. Lascoux. 
\newblock Classes de Chern des vari\'{e}t\'{e}s de drapeaux.
\newblock {\em Comptes Rendus Acad. Sci. Paris}, Ser. I 295 (1982), 393--398.




\bibitem{lassalle:90}
M. Lassalle. 
\newblock Une formule du bin\^{o}me g\'{e}n\'{e}ralis\'{e}e pour les polyn\^{o}mes de Jack.
\newblock {\em Comptes Rendus Acad. Sci. Paris}, Ser. I 310 (1990), 253--256.


\bibitem{macdonald:95}
I. G. Macdonald.
\newblock {\em Symmetric functions and Hall polynomials},
\newblock Clarendon Press, Oxford (1995).


\bibitem{okounkov:97}
A. Okounkov.
\newblock Binomial formula for Macdonald polynomials and applications
\newblock {\em Mathematical Research Letters} 4, (1997) 533--553.





\bibitem{okounkov:olshanski:97}
A. Okounkov and G.  Olshanski.
\newblock Shifted Jack polynomials, binomial formula, and applications.
\newblock {\em Mathematical Research Letters} 4, (1997) 69 -- 78.


\bibitem{sahi:94}
S. Sahi. 
\newblock The spectrum of certain invariant differential operators associated to a Hermitian symmetric space.
\newblock {\em Lie Theory and Geometry}, Progress in Mathematics 123. Boston: Birkhauser (1994) 569 --76.


\bibitem{sahi:98}
S. Sahi. 
\newblock The binomial formula for nonsymmetric Macdonald polynomials.
\newblock {\em Duke Mathematical Journal} 94, (1998) 465 -- 477.



\bibitem{sahi:11}
S. Sahi.
\newblock Binomial coefficients and Littlewood-Richardson coefficients for Jack polynomials.
\newblock {\em International Mathematics Research Notices} 7, (2011) 1597 -- 1612.

\bibitem{stanley:89}
R. Stanley.
\newblock Some combinatorial properties of Jack symmetric functions.
\newblock {\em Advances in Mathematics} 77, (1989) 76--115.

\end{thebibliography}
\end{document}